\newtheorem{theorem}{Theorem}[section]
\newtheorem{lemma}[theorem]{Lemma}
\newtheorem{corollary}[theorem]{Corollary}
\newtheorem{remark}[theorem]{Remark}
\theoremstyle{definition}
\newcommand{\R}{\mathbb{R}}
\newcommand{\C}{\mathbb{C}}
\newcommand{\ds}{\partial_\sigma}
\newcommand{\re}{\Re \text{e}}
\title[]{Collision of almost parallel\\ vortex filaments}
\author[V. Banica]{Valeria Banica}
\address[V. Banica]
{Laboratoire de Math\'ematiques et de Mod\'elisation d'\'Evry (UMR 8071)\\ 
Universit\'e d'\'Evry, 23 Bd. de France, 91037 Evry\\ 
France, Valeria.Banica@univ-evry.fr}
\author[E. Faou]{Erwan Faou}
\address[E. Faou]
{INRIA-Rennes Bretagne Atlantique and IRMAR (Universit\'e de Rennes I)\\ 
France, Erwan.Faou@inria.fr} 
\author[E. Miot]{Evelyne Miot}
\address[E. Miot]
{Centre de Math\'ematiques Laurent Schwartz (UMR 7640) \\ 
\'Ecole Polytechnique 91128 Palaiseau\\ 
France, Evelyne.Miot@math.polytechnique.fr} 
\thanks{First and last authors are partially supported by the French ANR project SchEq ANR-12-JS-0005-01. The second author is supported by the ERC starting grant GEOPARDI No. 279389. The last author is partially supported by the French ANR project GEODISP ANR-12-BS01-0015-01.}
\begin{document}
\date{June 8, 2015}

\maketitle

\begin{abstract}
We investigate the occurrence of collisions in the evolution of vortex filaments  through a system introduced by Klein, Majda and Damodaran  \cite{KlMaDa} and Zakharov \cite{Zh1,Zh2}. We first establish rigorously the existence of a pair of almost parallel vortex filaments, with opposite circulation, colliding at some point in finite time. The collision mechanism is based on the one of the self-similar solutions of the model, described in \cite{BFM}. In the second part of this paper we extend this construction to the case of an arbitrary number of filaments, with polygonial symmetry, that are perturbations of a configuration of parallel vortex filaments forming a polygon, with or without its center, rotating with constant angular velocity.
\end{abstract}

\section{Introduction}

We consider the system introduced by Klein, Majda and Damodaran  \cite{KlMaDa} and Zakharov \cite{Zh1,Zh2} to describe the evolution of $N$ almost parallel vortex filaments in a three-dimensional 
{incompressible} fluid.  According to this model, the vortex filaments are curves parametrized by 
 $$(\re( \Psi_j(t,\sigma)), \Im\text{m} (\Psi_j(t,\sigma)),\sigma),\quad \sigma\in \R,\quad 1\leq j\leq N,$$
where $\Psi_j:\R\times \R\to \mathbb{C}$. The dynamics of the set of curves is governed by an Hamiltonian system of one-dimensional Schr\"odinger  equations with vortex type interaction for the maps $\Psi_j$:
\begin{equation}\label{KMD}
\begin{cases}
\displaystyle 
i\partial_t \Psi_j+\alpha_j\Gamma_j \partial_\sigma^2\Psi_j+\sum_{k\neq j} \Gamma_k \frac{\Psi_j-\Psi_k}{|\Psi_j-\Psi_k|^2}=0,\quad 1\leq j\leq N,\\
\Psi_j(0,\sigma)=\Psi_{j,0}(\sigma). 
\end{cases}
\end{equation}Moreover, the filaments are parallel at infinity, which means that there exist $z_j(t)\in \C$ such that
\begin{equation}\label{cond:parallel}\Psi_j(t,\sigma)\to z_j(t),\quad \text{as }\sigma \to \pm \infty.\end{equation}

In \eqref{KMD}, $\alpha_j\in\mathbb R^+$ denotes a parameter related to the core structure  and $\Gamma_j\in \R^\ast$ the circulation of the $j$th filament. Throughout the paper, we will assume that $\alpha_j=1$ for all $1\leq j\leq N$. 

Particular solutions of \eqref{KMD} - \eqref{cond:parallel} are given by the collections of parallel filaments, for which  $\Psi_j(t,\sigma)=X_j(t,\sigma)=(z_j(t),\sigma)$, where $(z_j)$ is a solution of the 2-D point vortex system
\begin{equation}\label{point-vortex}
\displaystyle 
i\dot{z}_j+\sum_{k\neq j} \Gamma_k \frac{z_j-z_k}{|z_j-z_k|^2}=0,\quad 1\leq j\leq N.
\end{equation}
The Cauchy theory for \eqref{KMD} with the condition \eqref{cond:parallel}, under the assumption that the curves $(\Psi_j(t,\sigma),\sigma)$ are perturbations of the lines $(z_j(t),\sigma)$,  has been studied by Klein, Majda and Damodaran \cite{KlMaDa}, by Kenig, Ponce and Vega \cite{KePoVe} and more recently by Banica and Miot \cite{BaMi, BaMi2} and Banica, Faou and Miot \cite{BFM}.  In these works, the maximal time of existence corresponds to the first occurence of a collision between two or more filaments. We also refer to the results  by Lions and Majda \cite{LiMa} on global existence of weak, space-periodic solutions. The aim of the present paper is to construct a solution to \eqref{KMD} with the condition \eqref{cond:parallel} such that the filaments exhibit a collision in finite time, in two particular situations that are described below. 
\subsection{The anti-parallel pair of filaments}
We will first consider pairs of filaments, namely $N=2$, with opposite circulations $$\Gamma_1=-\Gamma_2.$$ By rescaling in space and time, there is no restriction to rewrite system \eqref{KMD} as
\begin{equation}\label{KMD2same}
\begin{cases}
\displaystyle i\partial_t \Psi_1+\partial_\sigma^2\Psi_1-2\frac{\Psi_1-\Psi_2}{|\Psi_1-\Psi_2|^2}=0,\\
\displaystyle i\partial_t \Psi_2-\partial_\sigma^2\Psi_2-2\frac{\Psi_1-\Psi_2}{|\Psi_1-\Psi_2|^2}=0.\\
\end{cases}
\end{equation}
Reconnection - or collision -, that is existence of a point $(t,\sigma)$ where the filaments collide, occurs when $\Psi_1(t,\sigma)-\Psi_2(t,\sigma)=0$.
We note that we have as a particular solution of \eqref{KMD} the so-called anti-parallel vortex filament pair $(X_1,X_2)$ with the parallel filaments $X_j(t,\sigma)=(z_j(t),\sigma)$ given by
$$z_1(t)=-it+1,\quad z_2(t)=-{it}-1.$$
Since Crow's work  \cite{Cr} in the seventies, examples of vortex filament reconnection are searched as perturbations of the anti-parallel vortex filament pair. Most of these results are numerical, and based on initial perturbations in link with the unstable mode of the linearized equation. In this paper we shall prove the existence of a perturbation of the anti-parallel vortex filament pair reconnecting in finite time through \eqref{KMD2same}. The collision mechanism will be based on the one of the self-similar solutions of \eqref{KMD2same}, that we describe next.

We consider as in \cite{Zh1,Zh2,MaBe} an initial configuration of two filaments satisfying the following symmetry:
$$\Psi_1(0)=-\overline{\Psi}_2(0).$$
Since $(-\overline{\Psi}_2,-\overline{\Psi}_1)$ is still a solution of  \eqref{KMD2same} with same initial datum, this kind of symmetry is conserved as long as the solution $(\Psi_1,\Psi_2)$ to \eqref{KMD2same} exists:
$$\Psi_1(t)=-\overline{\Psi}_2(t).$$
This yields $\Psi_1-\Psi_2=2\re(\Psi_1)$ and so system \eqref{KMD2same} reduces to only one equation:
\begin{equation}\label{eqpsi}
 i\partial_t \Psi_1+\partial_\sigma^2\Psi_1-\frac{1}{\re(\Psi_1)}=0.
\end{equation}
 
Constructing filaments exhibiting a collision in finite time reduces to finding solutions to the Schr\"odinger-like equation \eqref{eqpsi} vanishing in finite time at some point, starting with non-vanishing initial data. This requires the non-standard study of pointwise control of nonlinear solutions. This kind of issue has been considered by Merle and Zaag \cite{MeZa} for a model related to the heat equation. However the approach developed in \cite{MeZa} does not seem to be tractable to our case. For more details see the discussion at the end of \S2 in \cite{BFM}.\\

As suggested in \cite{Zh1,Zh2,KlMaDa} a natural candidate for a solution of \eqref{eqpsi} with collision in finite time should be self-similar, namely
$$\Psi_1(t,\sigma)=\sqrt{t}\:u\left(\frac{\sigma}{\sqrt{t}}\right),$$
with the asymptotics
\begin{equation}\label{infcond}
u(x)\sim \alpha |x|,\quad |x|\to+\infty,
\end{equation}
where $\alpha \in \C$.
In view of \eqref{eqpsi}, the equation for the profile $u(x)$ is 
\begin{equation}\label{equ}
i(u-xu')+2 u''-\frac{2}{\re(u)}=0.
\end{equation}
We rewrite equation \eqref{equ} as
\begin{equation}\label{syst:self-sim}
\begin{cases}
\displaystyle v' = \frac{1}{2}{i x v} - \frac{x}{ \re(u) }\\
\displaystyle u - x u' = v.
\end{cases}
\end{equation}

We recall our result  \cite[Theo. 3.1]{BFM}.
\begin{theorem}[\cite{BFM}]\label{thm:ak}There exists $K>1$ such that the following holds. 
Let $\alpha\in \C$  with $\re(\alpha)>K$ and 
$$E=\left\{w\in C^1(\R),\:w(0)=w'(0)=0,\:\: w\text{  even  },\:\: \|w\|_{L^\infty} + \||x|^{-1}w'\|_{L^\infty}\leq \frac{\re(\alpha)}{4}\right\}.$$
There exists a unique $v \in 1+ E$ such that the couple $(u,v)$, with $u$ defined by
\begin{equation}\label{couplage}
u(x) = 1+|x|\left( \alpha + \int_{|x|}^{+\infty} \frac{v(z)-1}{z^2} \,dz\right), \quad \forall x\neq 0,
\end{equation}
with 
$$u(0)=1,$$
is a solution of the system \eqref{syst:self-sim} satisfying the condition \eqref{infcond}. Moreover,
\begin{equation}\label{poscond}
\re ((u(x)))>0, \quad \forall x\in\mathbb R.
\end{equation}
Finally, $u$ is a Lipschitz function on $\R$ with $u'\in C(\R\setminus \{0\})$.
\end{theorem}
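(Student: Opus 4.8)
The plan is to decouple the system \eqref{syst:self-sim} into a single fixed-point equation for $w:=v-1$ in the closed ball $E$ of the Banach space $X=\{w\in C^1(\R):\ w\text{ even},\ w(0)=w'(0)=0,\ \|w\|_X<\infty\}$ with $\|w\|_X:=\|w\|_{L^\infty}+\||x|^{-1}w'\|_{L^\infty}$, and to solve it by the contraction mapping principle when $\re(\alpha)$ is large. First I would use the second equation $u-xu'=v$ to express $u$ in terms of $v=1+w$: dividing by $x^2$ rewrites it as $\frac{d}{dx}(u/x)=-v/x^2$ on $x>0$, and integrating from $|x|$ to $+\infty$, with the normalization coming from \eqref{infcond} and from $u(0)=1$, produces exactly \eqref{couplage}. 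A direct differentiation confirms that \eqref{couplage} solves $u-xu'=v$ with $u(0)=1$, and since $\big|\int_{|x|}^{+\infty}(v-1)/z^2\,dz\big|\le \|w\|_{L^\infty}/|x|\to 0$, the asymptotics \eqref{infcond} hold \emph{automatically}. Thus \eqref{couplage} is not an extra constraint but the definition of the map $w\mapsto u[w]$.

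Next I would insert $u=u[w]$ into the first equation, which becomes the closed scalar equation $w'=\tfrac{i}{2}xw+x\big(\tfrac{i}{2}-\tfrac{1}{\re(u)}\big)$. Using the integrating factor $e^{ix^2/4}$ together with $w(0)=0$, this is equivalent to the fixed-point equation $w=\Phi(w)$, where
\[
\Phi(w)(x)=e^{ix^2/4}\int_0^x e^{-iy^2/4}\,y\Big(\tfrac{i}{2}-\tfrac{1}{\re(u[w](y))}\Big)\,dy .
\]
One checks readily that $\Phi(w)$ is even, of class $C^1$, and vanishes together with its derivative at $0$, so $\Phi$ maps $X$ into $X$. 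The crucial preliminary estimate is a uniform lower bound on $\re(u[w])$: splitting $\int_{|x|}^{+\infty}w/z^2$ at the crossover between the bounds $|w|\le\|w\|_{L^\infty}$ and $|w(z)|\le\tfrac12\||x|^{-1}w'\|_{L^\infty}z^2$ (both available on $E$) yields $\re(u[w](x))\ge 1$ for all $x$, and moreover $\re(u[w](x))\gtrsim \re(\alpha)|x|$ away from the origin. This gives \eqref{poscond} and makes $\Phi$ well defined on $E$.

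For the self-mapping $\Phi(E)\subset E$ I would isolate the explicit main term: $\tfrac{i}{2}\int_0^x e^{-iy^2/4}\,y\,dy=1-e^{-ix^2/4}$, so the $\tfrac{i}{2}$-part of $\Phi(w)$ equals exactly $e^{ix^2/4}-1$, of $L^\infty$ norm $\le 2$ and derivative seminorm $\tfrac12$. The remainder $R[w](x)=e^{ix^2/4}\int_0^x e^{-iy^2/4}\,y/\re(u)\,dy$ does \emph{not} decay if estimated in modulus, so here one must integrate by parts using $y\,e^{-iy^2/4}=2i\,\tfrac{d}{dy}e^{-iy^2/4}$: the boundary terms are controlled by $1/\re(u)\le 1$ and the remaining integrand by $|(\re u)'|/(\re u)^2\lesssim \re(\alpha)/(1+\re(\alpha)|y|)^2$, which is integrable with an $\alpha$-independent bound. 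Hence $\|\Phi(w)\|_X$ is bounded by an absolute constant, which is $\le \re(\alpha)/4$ once $K$ is chosen large enough, so $\Phi(E)\subset E$.

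Finally, for the contraction the explicit main term cancels in $\Phi(w_1)-\Phi(w_2)$, leaving only $\frac{1}{\re(u_1)}-\frac{1}{\re(u_2)}=\frac{\re(u_2-u_1)}{\re(u_1)\re(u_2)}$, with $|u_1-u_2|\le\|w_1-w_2\|_{L^\infty}$ and $u_1(0)=u_2(0)=1$ (so the boundary term at $0$ vanishes). Combining the lower bounds $\re(u_i)\gtrsim\re(\alpha)|x|$ with the same oscillatory integration by parts, and tracking the crossover scales between the two seminorms of $w_1-w_2$, yields $\|\Phi(w_1)-\Phi(w_2)\|_X\le \frac{C}{\re(\alpha)}\|w_1-w_2\|_X$; enlarging $K$ makes $\Phi$ a contraction on $E$, and the Banach fixed-point theorem provides the unique $w\in E$, i.e. the unique $v=1+w\in 1+E$. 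The Lipschitz regularity of $u$ and $u'\in C(\R\setminus\{0\})$ then follow from \eqref{couplage} by differentiation, the only non-smoothness being the corner at $0$ produced by $|x|$. \emph{The main obstacle is exactly the uniform-in-$x$ control of the oscillatory integrals: a naive modulus bound gives linear growth (self-map) or logarithmic growth (contraction) in $x$, so the oscillation $e^{\pm iy^2/4}$ must be exploited via integration by parts, and the smallness of the contraction constant rests on a delicate balance of the two seminorms of $E$ against the lower bound $\re(u)\gtrsim\re(\alpha)|x|$.}
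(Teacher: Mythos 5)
Your proposal follows essentially the same route as the paper's (i.e.\ the proof of Theorem 3.1 in \cite{BFM}, whose strategy is recalled in Section \ref{sec:extension} and whose key estimates appear in Lemma \ref{lemma:borrowed}): a contraction argument for $w=v-1$ in the ball $E$, with $u(w)$ reconstructed from \eqref{couplage}, the same fixed-point operator $e^{ix^2/4}-1-e^{ix^2/4}\int_0^x y e^{-iy^2/4}/\re\bigl(u(w)(y)\bigr)\,dy$, the lower bound $\re(u(w))\gtrsim 1+\re(\alpha)|x|$, and integration by parts in the oscillatory integral to gain the $1/\re(\alpha)$ contraction factor. The argument is correct as sketched.
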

In view of the regularity assumption on $u$, we infer that Theorem \ref{thm:ak} provides  $(\Psi_1,\Psi_2)$ satisfying the equation \eqref{KMD2same} in a strong sense on $\mathbb R_+^\ast\times \mathbb R^*$ and in mild sense on $\mathbb R_+^\ast \times \mathbb R$.  Since
\begin{equation*}
(\Psi_1-\Psi_2)(t,\sigma) = 2\sqrt{t}\left(\re \:u\right)  \left(\frac{\sigma}{\sqrt{t}}\right),
\end{equation*}
the property \eqref{poscond} implies that for $t> 0$  no collision occurs. Moreover, the condition $u(0)=1$ implies that $|\Psi_1(t,0)-\Psi_2(t,0)|=2\sqrt{t}$. Therefore collision occurs at time $t=0$ and at position $\sigma=0$. 

Unfortunately, the self-similar solutions constructed in Theorem \ref{thm:ak} do not enter the setting of the  model \eqref{KMD} - \eqref{cond:parallel}. Indeed, given the asymptotics \eqref{infcond}, the two filaments parametrized by $\Psi_1$ and $\Psi_2$ are not parallel at infinity. Still, having at one's disposal such configurations with a precise description of their behavior will enable us to construct colliding solutions of \eqref{KMD} that are parallel at infinity. 
Our main results are the following.
\begin{theorem} There exists $K_0\geq K>1$ such that the following holds. Let $\alpha>K_0$.
\label{thm:main}
Let $$H(t,\sigma)=\sqrt{t}u\left(\frac{\sigma}{\sqrt{t}}\right),$$ where $u$ is given in Theorem \ref{thm:ak} for this value of $\alpha$.
Then there exists $t_0\in (0,1)$, depending on $\alpha$, and there exists a solution $\Psi_1:(0,t_0]\times\R\to \mathbb{C}$ to the equation \eqref{eqpsi}, which has the form
\begin{equation}\label{main:ansatz}
\Psi_1(t,\sigma)=-it+r(t,\sigma)+\frac{H(t,\sigma)}{1+\psi(\alpha |\sigma|)},
\end{equation}
where $\psi(\tau)=\tau \varphi(\tau)$, with $\varphi:\R_+\to [0,1]$ 
a smooth, cut-off function satisfying $\varphi=0$ on $[0,1]$ and $\varphi=1$ on $[2,+\infty)$, and where $r\in C([0,t_0], H^1(\R))$ satisfies  $\|r(t)\|_{L^\infty} < \sqrt{t}/4$ as $t\to 0$. Moreover, we have
\begin{equation}\label{chichi}
\re \left(\Psi_1(t,\sigma)\right)\geq
\begin{cases}\displaystyle  \frac{1}{4}(\sqrt{t}+\alpha |\sigma|)\quad \text{if}\quad \alpha |\sigma|\leq \frac 12,\\\,\\
\displaystyle \frac{1}{32}\quad \text{if}\quad \alpha |\sigma| \geq \frac 12.
\end{cases}
\end{equation}
In particular, 
$$\re (\Psi_1(t,\sigma))>0,\quad \forall (t,\sigma)\in (0,t_0]\times \R,\quad \re(\Psi_1(0,0))=0,$$
and for all $t\in (0,t_0]$
$$\Psi_1(t,\sigma)\to -it+1,\quad \text{as } \sigma \to \pm \infty.$$
\end{theorem}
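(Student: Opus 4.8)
The plan is to read \eqref{main:ansatz} as a perturbation of two \emph{exact} solutions of \eqref{eqpsi}: the parallel filament $-it+1$, which solves \eqref{eqpsi} because $\re(-it+1)=1$ and $i\dt(-it)=1$, and the self-similar profile $H$, which solves \eqref{eqpsi} on $\R_+^\ast\times\R^\ast$ by Theorem \ref{thm:ak}. Writing $\beta(\sigma)=(1+\psi(\alpha|\sigma|))^{-1}$, so that $\beta\equiv1$ on the inner zone $\alpha|\sigma|\le1$ and $\beta(\sigma)H(t,\sigma)\to1$ as $|\sigma|\to\infty$, I would insert $\Psi_1=-it+r+\beta H$ into \eqref{eqpsi}. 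Since $\dt\beta=0$, $\partial_\sigma^2(\beta H)=\beta\partial_\sigma^2H+2\beta'\ds H+\beta''H$ and $\beta\,(i\dt H+\partial_\sigma^2H)=\beta/\re H$, and since $\re(\beta H)=\beta\,\re H$, one is left with
\[
i\dt r+\partial_\sigma^2 r= S+\mathcal N(r),\qquad S=-1+\frac{1-\beta^2}{\beta\,\re H}-2\beta'\ds H-\beta''H,
\]
where $\mathcal N(r)=\frac{1}{\re(r+\beta H)}-\frac{1}{\beta\,\re H}=-\frac{\re r}{\re(r+\beta H)\,\beta\,\re H}$ is genuinely quadratic. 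The key structural remark is that $S\equiv-1$ on the inner zone (harmless for the $L^2$ and $\dot H^1$ parts, and contributing only $O(t)=o(\sqrt t)$ through Duhamel), while every factor $1-\beta^2$, $\beta'$, $\beta''$ is supported on the transition zone $1\le\alpha|\sigma|\le2$, where $|\sigma|\gtrsim\alpha^{-1}$ is bounded away from the collision point.

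\textbf{Profile estimates and the gain in $\alpha$.} Next I would record the quantitative input from Theorem \ref{thm:ak}: positivity $\re u>0$, the asymptotics $u(x)\sim1+\alpha|x|$ with $u'(x)\to\pm\alpha$, and the lower bound $\re u(x)\ge\tfrac12(1+\alpha|x|)$ for $\alpha>K_0$, which follows from \eqref{couplage} by using \emph{both} constraints $\|v-1\|_{L^\infty}\le\alpha/4$ and $\||x|^{-1}v'\|_{L^\infty}\le\alpha/4$ defining $E$ (the second gives $|v(z)-1|\le\tfrac\alpha8 z^2$ near $0$). After rescaling this reads $\re H(t,\sigma)=\sqrt t\,\re u(\sigma/\sqrt t)\ge\tfrac12(\sqrt t+\alpha|\sigma|)$, which is the engine of \eqref{chichi}, while $\ds H=u'(\sigma/\sqrt t)$ and $H=\sqrt t\,u(\sigma/\sqrt t)$ stay bounded on the transition zone. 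Routine estimates then give $\|S(t)\|_{H^1}\le C(\alpha)$ uniformly on $(0,t_0]$, the constant growing only polynomially in $\alpha$ (from the derivatives of $\beta$). For $\mathcal N$ the decisive point is $|\mathcal N(r)|\le\frac{|\re r|}{\re(r+\beta H)\,\beta\,\re H}$ together with $\beta\,\re H\gtrsim(\sqrt t+\alpha|\sigma|)$: pairing with $|r|$ and integrating, the singular weight yields the gain $\int_\R(\sqrt t+\alpha|\sigma|)^{-2}\,d\sigma\sim(\alpha\sqrt t)^{-1}$, so that the a priori bound $\|r(t)\|_{L^\infty}\le\sqrt t/4$ produces a nonlinear contribution $\lesssim\|r\|_{L^\infty}^2(\alpha\sqrt t)^{-1}\lesssim\alpha^{-1}\sqrt t$.

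\textbf{Construction of $r$ and the main obstacle.} I would then solve for $r$ anchored at $t=0$ with $r(0)=0$ by a coupled continuity argument in
\[
X=\{r\in C([0,t_0],H^1):\ \|r(t)\|_{L^\infty}\le\sqrt t/4,\ \|r(t)\|_{H^1}\le M\sqrt t\}.
\]
The source is absorbed once $t_0=t_0(\alpha)$ is chosen small (here the dependence of $t_0$ on $\alpha$ enters, to beat the $C(\alpha)t$ contribution of the transition zone); the $L^2$ and $L^\infty$ bounds then close by an energy estimate, where taking imaginary parts of the pairing with $\bar r$ kills the Laplacian and the worst nonlinear term is controlled by the weight computation above, giving $\tfrac{d}{dt}\|r\|_{L^2}^2\lesssim\|S\|_{L^2}\|r\|_{L^2}+\alpha^{-1}\sqrt t$, hence $\|r(t)\|_{L^2}=O(\sqrt t)$ with a strictly sub-budget constant for $\alpha>K_0$. \emph{The main obstacle is the derivative control near the collision point $(t,\sigma)=(0,0)$}, where $1/\re H\sim t^{-1/2}$ is singular: the naive $\dot H^1$ energy estimate encounters the weight $\re(r+\beta H)^{-1}(\beta\re H)^{-1}\sim t^{-1}$ at $\sigma=0$, which is borderline (log-divergent) under Gr\"onwall. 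This is closed by exploiting the self-similar scaling of the inner zone: in the variable $x=\sigma/\sqrt t$ the weight $(\sqrt t+\alpha|\sigma|)^{-2}$ becomes the scale-invariant, $\alpha$-small factor $t^{-1}(1+\alpha|x|)^{-2}$, and the residual prefactor $t^{-1/2}$ is time-integrable ($\int_0^t s^{-1/2}\,ds=2\sqrt t$), so that the surplus converts into the admissible $O(\sqrt t)$ growth while the largeness of $\alpha$ furnishes the contraction for the difference of two iterates.

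\textbf{Conclusions.} From the fixed point, \eqref{main:ansatz} holds with $r\in C([0,t_0],H^1)$ and $\|r(t)\|_{L^\infty}<\sqrt t/4$. Continuity at $t=0$ forces $r(0)=0$, so $\re\Psi_1(0,0)=\re(\beta H)(0,0)=0$, the collision. For $t>0$ one has $\re\Psi_1=\re r+\beta\,\re H$: on $\alpha|\sigma|\le\tfrac12$, $\beta=1$ and $\re H\ge\tfrac12(\sqrt t+\alpha|\sigma|)$, whence $\re\Psi_1\ge\tfrac12(\sqrt t+\alpha|\sigma|)-\sqrt t/4\ge\tfrac14(\sqrt t+\alpha|\sigma|)$; on $\alpha|\sigma|\ge\tfrac12$ the explicit form of $\beta$ and $\re H\gtrsim1$ give $\beta\,\re H\ge\tfrac1{16}$, hence $\re\Psi_1\ge\tfrac1{16}-\sqrt t/4\ge\tfrac1{32}$ for $t_0$ small, which is exactly \eqref{chichi}; in particular $\re\Psi_1>0$ on $(0,t_0]\times\R$. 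Finally, since $\beta(\sigma)H(t,\sigma)\to1$ and $r(t,\cdot)\in H^1\hookrightarrow C_0$, we obtain $\Psi_1(t,\sigma)\to-it+1$ as $\sigma\to\pm\infty$, completing the proof.
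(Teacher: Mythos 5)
Your reduction is the same as the paper's: the ansatz $\Psi_1=-it+r+\beta H$ with $\beta=(1+\psi(\alpha|\sigma|))^{-1}$, the resulting equation $i\dt r+\partial_\sigma^2 r=$ (source supported essentially on the transition zone and decaying at infinity) $+$ (a term $-\re r/\bigl(\re(r+\beta H)\,\beta\re H\bigr)$ with singular weight), the lower bound $\re H\geq \tfrac12(\sqrt t+\alpha|\sigma|)$, and the endgame deriving \eqref{chichi} and the parallelism at infinity are all exactly what the paper does. The genuine divergence, and the gap, is in how you propose to close the estimates for $r$.

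First, the space $\{\|r(t)\|_{H^1}\leq M\sqrt t\}$ is too strong to be propagated. Already the term $\re r\,\partial_\sigma(\re H)/\bigl(\re(r+H)(\re H)^2\bigr)$ on the inner zone is pointwise of size $\alpha\sqrt s\,(\sqrt s+\alpha|\sigma|)^{-3}$ (using $|r|\leq\sqrt s/4$ and $|\ds H|\leq 2\alpha$), whose $L^2_\sigma$ norm is $\sim\alpha^{1/2}s^{-3/4}$; its contribution to $\|\partial_\sigma r(t)\|_{L^2}$ through Duhamel (or through the $\dot H^1$ energy identity) is $\sim\alpha^{1/2}t^{1/4}\gg\sqrt t$. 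Moreover the term $\partial_\sigma\re r/\bigl(\re(r+H)\re H\bigr)$ paired with $\partial_\sigma\bar r$ produces $\frac{d}{dt}\|\partial_\sigma r\|_{L^2}^2\lesssim \frac{C}{t}\|\partial_\sigma r\|_{L^2}^2+\dots$ with $C\geq 8$ an absolute constant, and the ODE $y'=Cy/t+g$ with $C>1$ does not preserve $y(t)=O(t)$; your appeal to self-similar variables does not remove this, since $(1+\alpha|x|)^{-2}$ is of order one at $x=0$ and its smallness is only available after integration in $x$, which the quadratic form $\int|\partial_\sigma r|^2(\sqrt t+\alpha|\sigma|)^{-2}$ does not see. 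This is precisely why the paper's norm \eqref{def:X-2} only demands $\|\partial_\sigma r(t)\|_{L^2}\lesssim t^\gamma$ with $\gamma<1/4$, obtained via the inhomogeneous Strichartz estimate $L^{4/3}_tL^1_\sigma\to L^\infty_tL^2_\sigma$ applied to the inner-zone part of $\partial_\sigma a(r)$ rather than by energy estimates.

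Second, and consequently, the bound $\|r(t)\|_{L^\infty}\leq\sqrt t/4$ on the inner zone $I$ --- which is indispensable both for \eqref{chichi} and for keeping the denominator $\re(r+\beta H)\gtrsim\sqrt t+\alpha|\sigma|$ --- is not established by your argument. With $\|r\|_{L^2}\sim t^{3/4}$ and $\|\partial_\sigma r\|_{L^2}\sim t^\gamma$, Gagliardo--Nirenberg only yields $\|r\|_{L^\infty}\lesssim t^{3/8+\gamma/2}$, which is weaker than $\sqrt t$ for $\gamma<1/4$ (and even at $\gamma=1/4$ the constant is not under control). The paper closes this by a separate, dedicated device: commuting a cutoff $\chi$ supported near $I$ with the free Schr\"odinger flow and using the dispersion estimate, which converts the pointwise bound on $I$ into $\int_0^t(t-s)^{-1/2}\|F(s)\|_{L^1(-1/\alpha,1/\alpha)}ds$ plus commutator terms of size $(t-s)^{1/2}(\alpha^{3/2}\|F\|_{L^2}+\alpha^{1/2}\|\partial_\sigma F\|_{L^2})$; the $L^1(I)$ norm of the singular weight then supplies the decisive factor $\alpha^{-1/2}$ that makes the budget $\sqrt t/4$ and the contraction work. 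Without a substitute for this step your scheme cannot produce the stated $\|r(t)\|_{L^\infty}<\sqrt t/4$, so the proof is incomplete at its central point.
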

\begin{remark} For a given $\alpha$ and cut-off function $\psi$, the map $\Psi_1$ constructed in Theorem \ref{thm:main} is the unique mild solution of \eqref{eqpsi}  of the form \eqref{main:ansatz} in the functional space considered for the perturbation $r(t,\sigma)$ (see \eqref{def:X-1}, \eqref{def:X-2})), that is 
\begin{multline*}
\Psi_1(t,\sigma)=-it+1+e^{it\partial_\sigma^2}\left(\frac{\alpha |\sigma|}{1+\psi(\alpha |\sigma|)}-1\right)\\[2ex] -i\int_0^te^{i(t-s)\partial_\sigma^2}\left(\frac{1}{\re (\Psi_1(s,\sigma))}-1\right)\,ds.
\end{multline*}
Note that $$\frac{\alpha |\sigma|}{1+\psi(\alpha |\sigma|)}-1=\frac{H(0,\sigma)}{1+\psi(\alpha |\sigma|)}-1\in L^2(\R),$$
and that, in view of the estimates established in Lemmas \ref{lemma:H} and \ref{lemma:H-r},
$$\frac{1}{\re (\Psi_1)}-1\in L^1((0,t_0),L^2(\R)).$$
 \end{remark}

\begin{corollary}
With the notations of the previous theorem, the filaments $(\Psi_1, \Psi_2)$ with $\Psi_2=-\overline{\Psi}_1$ are solutions to \eqref{KMD2same}, satisfying the parallelism condition at infinity \eqref{cond:parallel}, and colliding at time $t=0$ at position $\sigma=0$.

\end{corollary}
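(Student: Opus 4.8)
The plan is to check directly that the pair $(\Psi_1,\Psi_2)$ with $\Psi_2:=-\overline{\Psi}_1$ verifies each of the three assertions, drawing entirely on the conclusions of Theorem \ref{thm:main}. The starting point is the algebraic consequence of the conjugation symmetry: since $\Psi_2=-\overline{\Psi}_1$, one has $\Psi_1-\Psi_2=\Psi_1+\overline{\Psi}_1=2\re(\Psi_1)$, which is real and, by the positivity estimate \eqref{chichi}, strictly positive on $(0,t_0]\times\R$. Hence $|\Psi_1-\Psi_2|^2=4(\re(\Psi_1))^2$, the interaction term is well defined there, and
$$2\,\frac{\Psi_1-\Psi_2}{|\Psi_1-\Psi_2|^2}=2\,\frac{2\re(\Psi_1)}{4(\re(\Psi_1))^2}=\frac{1}{\re(\Psi_1)}.$$

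Substituting this identity into the first line of \eqref{KMD2same} reproduces precisely equation \eqref{eqpsi}, which $\Psi_1$ solves by construction. For the second line I would substitute $\Psi_2=-\overline{\Psi}_1$ and use that $\re(\Psi_1)$ is real, so that its reciprocal is its own conjugate; the equation then reads $-i\partial_t\overline{\Psi}_1+\partial_\sigma^2\overline{\Psi}_1-1/\re(\Psi_1)=0$, which is exactly the complex conjugate of \eqref{eqpsi}. Thus both equations of the system hold, in the strong sense on $(0,t_0]\times\R^\ast$ and in the mild sense on $(0,t_0]\times\R$, inherited from the corresponding statement for $\Psi_1$; at the level of the Duhamel formula of the Remark this is the observation that conjugation intertwines the propagators $e^{it\partial_\sigma^2}$ and $e^{-it\partial_\sigma^2}$ governing the two equations.

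It remains to read off the parallelism and collision statements. For \eqref{cond:parallel}, the asymptotics $\Psi_1(t,\sigma)\to -it+1$ of Theorem \ref{thm:main} give $z_1(t)=-it+1$, whence by conjugation $\Psi_2(t,\sigma)\to -\overline{(-it+1)}=-it-1=:z_2(t)$, recovering exactly the anti-parallel configuration introduced above. For the collision, I would use $\Psi_1-\Psi_2=2\re(\Psi_1)$ once more: for $t>0$ the strict positivity $\re(\Psi_1)>0$ forces $\Psi_1-\Psi_2\neq 0$, so no reconnection occurs, while $\re(\Psi_1(0,0))=0$ yields $(\Psi_1-\Psi_2)(0,0)=0$, i.e. the filaments meet precisely at $t=0$, $\sigma=0$. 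There is no substantial obstacle here, since the entire analytic content is packaged in Theorem \ref{thm:main}; the only points demanding genuine care are verifying that the second equation of the system is the exact conjugate of \eqref{eqpsi} (which hinges on the reality of $\re(\Psi_1)$) and that conjugation preserves the functional framework, so that $\Psi_2$ is a mild solution whenever $\Psi_1$ is.
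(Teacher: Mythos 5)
Your proof is correct and follows essentially the same route as the paper: the corollary is an immediate consequence of Theorem \ref{thm:main} together with the reduction of \eqref{KMD2same} to \eqref{eqpsi} under the symmetry $\Psi_2=-\overline{\Psi}_1$, which the paper sets up in the introduction (the second equation being the conjugate of \eqref{eqpsi}, and $\Psi_1-\Psi_2=2\re(\Psi_1)$ giving both the positivity for $t>0$ and the collision at $(0,0)$). Your verification of the asymptotics $z_2(t)=-it-1$ and of the conjugation of the mild formulation are exactly the checks the paper leaves implicit.
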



\medskip

Note that at the collision time $t=0$ we have $\Psi(0,\sigma)=\alpha |\sigma|/(1+\psi(\alpha |\sigma|))$. 
By varying the function $\psi$ we obtain a whole family of collision scenarios for two filaments, with the commun specificity that a corner is formed for each filament at $\sigma=0$. A similar behavior occurs in the setting of the binormal flow equation, which governs the evolution of a single vortex filament. More precisely, Banica and Vega displayed in \cite{BV} a wide class of solutions generating a corner in finite time through the binormal flow equation, based on the description of the self-similar solutions provided by Guti\'errez, Rivas and Vega in \cite{GRV}.


\subsection{The case of several  filaments with polygonial symmetry}
Our next purpose is to extend the example of collision built for the pair of filaments to the case of an arbitrary number $N+1$ of  filaments associated with the functions $\Psi_0(t,\sigma)$, $\Psi_j(t,\sigma)$, $1 \leq j \leq N$ satisfying \eqref{KMD} with 
$$\Gamma_j=\Gamma,\quad \forall\, j, \quad  1\leq j\leq N, \quad \mbox{and} \quad \Gamma_0\in\mathbb R.$$
Without loss of generality we will assume that
$\Gamma=1.$
Setting
$$\omega=\frac{(N-1)}{2}+\Gamma_0,$$
we note that a particular solution of \eqref{KMD} is given by the collection of parallel filaments $X_j(t,\sigma)=(z_j(t),\sigma)$ with polygonial symmetry, obtained from the solution of the 2-D point vortex system forming a regular rotating polygon, with or without its center, with angular velocity $\omega$: 
\begin{equation}\label{def:sol-exacte} z_j(t)=e^{i(j-1)\frac{2\pi}{N}  }e^{i\omega t}z_1(0),\quad 1\leq j\leq N,\quad z_0(t)=0.\end{equation}
We focus on collections of filaments with the same symmetry as above, namely
\begin{equation}\label{hyp:sym}\Psi_{j}(t,\sigma)=e^{i(j-1)\frac{2\pi}{N}  }\Psi_1(t,\sigma),\quad \forall 1\leq j\leq N,\quad \Psi_0(t,\sigma)=0,\quad \forall \sigma \in \R.\end{equation}
This symmetry assumption is preserved in time. The system \eqref{KMD} is then written as a single equation
\begin{equation}\label{eqpsi-pol}
 i\partial_t \Psi_1+\partial_\sigma^2\Psi_1+\omega\frac{\Psi_1}{|\Psi_1|^2}=0.
\end{equation}
In the present case, reconnection of all filaments at the same place corresponds to a zero of $\Psi_1$.

\medskip

As for the pair of filaments, we first investigate the existence of a  self-similar solution of \eqref{eqpsi-pol} 
$$\Psi_1(t,\sigma)=\sqrt{t}\:u\left(\frac{\sigma}{\sqrt{t}}\right),$$
with the asymptotic behavior
\begin{equation}\label{infcond-pol}
u(x)\sim \alpha |x|,\quad |x|\to+\infty.
\end{equation}
The equation for the profile $u(x)$ now is 
\begin{equation}\label{equ-pol}
i(u-xu')+2u''+2\omega\frac{ u}{|u|^2}=0
\end{equation}
which we decouple as
\begin{equation}\label{syst:self-sim-pol}
\begin{cases}
\displaystyle v' = \frac{1}{2}i x v + \omega x \frac{ u}{|u|^2}\\
\displaystyle u - x u' = v.
\end{cases}
\end{equation}

We will establish the analogous result of Theorem \ref{thm:ak}.
\begin{theorem}\label{thm:ak-pol}There exists $\tilde{K}>1$, depending on $\omega$, such that the following holds. 
Let $\alpha \in \C$  with $\re(\alpha)>\tilde{K}$ and such that$|\alpha|\leq \re(\alpha)^2$. Let
$$E=\left\{w\in C^1(\R),\:w(0)=w'(0)=0,\:\: w\text{  even  },\:\: \|w\|_{L^\infty} + \||x|^{-1}w'\|_{L^\infty}\leq \frac{\re(\alpha)}{4}\right\}.$$
There exists a unique $v \in 1+ E$ such that the couple $(u,v)$, with $u$ defined by
\begin{equation}\label{couplage}
u(x) = 1+|x|\left( \alpha + \int_{|x|}^{+\infty} \frac{v(z)-1}{z^2} \,dz\right), \quad \forall x\neq 0,
\end{equation}
with $$u(0)=1,$$
is a solution of the system \eqref{syst:self-sim-pol} satisfying the condition \eqref{infcond-pol}. Moreover,
\begin{equation}\label{poscond-pol}
|u(x)|\geq 1,\quad \forall x\in\mathbb R,
\end{equation}
and $u$ is an even, Lipschitz function on $\R$ with $u'\in C(\R\setminus \{0\})$.
\end{theorem}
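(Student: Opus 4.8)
The plan is to reproduce the contraction scheme behind Theorem \ref{thm:ak}, reworking the estimates for the new nonlinearity $\omega\,u/|u|^2$. Note first that the formula \eqref{couplage} is built precisely so that, for every $v\in 1+E$, the function $u$ it produces is even and satisfies the second equation $u-xu'=v$ of \eqref{syst:self-sim-pol} on $\R\setminus\{0\}$; so only the first equation must be solved. Reading it as a linear ODE for $v$ with source $\omega x\,u/|u|^2$ and integrating factor $e^{-ix^2/4}$, and imposing the normalisation $v(0)=1$ forced by $v-1\in E$, I would set
\begin{equation*}
\Phi(v)(x)=e^{ix^2/4}\Big(1+\omega\int_0^x e^{-iz^2/4}\,z\,\frac{u[v](z)}{|u[v](z)|^2}\,dz\Big),
\end{equation*}
with $u[v]$ given by \eqref{couplage}, and seek a fixed point in the complete metric space $1+E$; such a fixed point solves both equations of \eqref{syst:self-sim-pol} by construction, and \eqref{infcond-pol} follows from \eqref{couplage} as $|x|\to+\infty$.

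Before estimating $\Phi$ I would prove the lower bound \eqref{poscond-pol}, which makes $u/|u|^2=1/\overline{u}$ well defined and bounded by $1$. Writing $w=v-1$, the two ingredients of the $E$-norm give $\|w\|_{L^\infty}\le \re(\alpha)/4$ and, since $w(0)=0$, $|w(z)|\le \frac{\re(\alpha)}{8}z^2$; inserting these into \eqref{couplage} yields $|x|\,\big|\int_{|x|}^{+\infty}w(z)z^{-2}\,dz\big|\le \frac38\re(\alpha)|x|$ for $|x|\le 1$ and $\le\frac14\re(\alpha)$ for $|x|\ge 1$. Hence $\re(u(x))\ge 1+\frac58\re(\alpha)|x|$ in the first regime and $\re(u(x))\ge 1$ in the second, so $|u|\ge\re(u)\ge 1$ everywhere.

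For the self-mapping I would check evenness (the integrand $z\,u/|u|^2$ is odd, $e^{\pm iz^2/4}$ even), the trivial identities $\Phi(v)(0)=1$, $\Phi(v)'(0)=0$, and the bound $\||x|^{-1}\Phi(v)'\|_{L^\infty}\le$ part of $\re(\alpha)/4$, which is read directly from $x^{-1}\Phi(v)'=\tfrac i2\Phi(v)+\omega\,u/|u|^2$ using $|u|\ge1$. The delicate point is the $L^\infty$ bound, where one must use the oscillation \emph{carefully}: on $|z|\le 1$ the phase is (nearly) stationary but $|z\,u/|u|^2|\le z/|u|\le C/\re(\alpha)$, so this part is bounded directly, \emph{without} integrating by parts; on $|z|\ge 1$ the phase is non-stationary and integration by parts leaves boundary terms $O(1/\re(\alpha))$ and a residual integral controlled by $\int_1^{+\infty}|u'|/|u|^2\,dz\lesssim |\alpha|/\re(\alpha)^2$. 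Here the hypothesis $|\alpha|\le\re(\alpha)^2$ keeps this residual bounded, so $\omega$ times the whole integral stays below $\re(\alpha)/8$ once $\tilde K$ is large (depending on $\omega$), giving $\|\Phi(v)-1\|_{L^\infty}\le\re(\alpha)/4$.

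The contraction is the heart of the matter and the step I expect to be the main obstacle. For $v_1,v_2\in 1+E$ the difference is governed by $1/\overline{u_1}-1/\overline{u_2}$, and \eqref{couplage} gives $|u_1-u_2|\le\|v_1-v_2\|$ globally and $|u_1-u_2|(z)\le z\,\|v_1-v_2\|$ near the origin. Splitting again at $|z|=1$, the stationary part $|z|\le1$ is estimated directly, which produces $O(\|v_1-v_2\|/\re(\alpha)^2)$ \emph{with no factor} $|\alpha|$; on $|z|\ge1$ one integrates by parts, and the surviving integral involves $\partial_z(1/\overline{u_1}-1/\overline{u_2})$, of size $|\alpha|\,|u_1-u_2|/|u|^3\sim |\alpha|\,\|v_1-v_2\|/(\re(\alpha)^3z^3)$, so that $\int_1^{+\infty}\lesssim |\alpha|\,\|v_1-v_2\|/\re(\alpha)^3$. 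The assumption $|\alpha|\le\re(\alpha)^2$ turns this into $O(\|v_1-v_2\|/\re(\alpha))$, whence a contraction factor $\le C\omega/\re(\alpha)<1$ for $\tilde K$ large. The crucial subtlety—and the reason the new hypothesis is needed—is that integrating by parts in the stationary region $|z|\lesssim1$ would spuriously bring in the large factor $|\alpha|$ through $u'\sim\alpha$, whereas in Theorem \ref{thm:ak} the analogous derivative is $(\re u)'=\re(\alpha)$, automatically of the right size; the condition $|\alpha|\le\re(\alpha)^2$ is exactly what compensates for the larger derivative $|u'|\sim|\alpha|$ of the nonlinearity $u/|u|^2$. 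Once $\Phi$ is a contraction, the Banach fixed-point theorem provides the unique $v\in 1+E$, and the evenness, the regularity $u'\in C(\R\setminus\{0\})$ with a corner at the origin, and the Lipschitz bound follow from the continuity of $v$ and from $|u|\ge1$ in \eqref{couplage}.
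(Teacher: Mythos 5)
Your proposal is correct and follows essentially the same route as the paper: the same fixed-point operator $P(w)(x)=e^{ix^2/4}-1+e^{ix^2/4}\omega\int_0^x y e^{-iy^2/4}\,\overline{u(w)}(y)^{-1}dy$ on $E$, the same key observations $|u|\geq \re(u)\geq 1+c\re(\alpha)|x|$ and $|u'|\leq 2|\alpha|$, and the same identification of $|\alpha|\leq\re(\alpha)^2$ as the condition controlling $|u'|/|u|^2$ so that the contraction closes. The paper only sketches this (deferring the estimates to \cite{BFM}), and your write-up supplies the details consistently with that sketch.
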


Our next aim is to establish the existence of a solution $\Psi_1$ to \eqref{eqpsi-pol}, satisfying \eqref{cond:parallel}, such that $\Psi_1(0,0)=0$. As for the anti-parallel pair of filaments, this will be performed by perturbing an exact parallel solution of \eqref{KMD} by the self-similar solution given by Theorem \ref{thm:ak-pol} after renormalization at infinity.
 More precisely, in view of \eqref{hyp:sym}, we will look for a solution having the form
\begin{equation}\label{cond:polygon}\Psi_j(t,\sigma)=z_j(t)\Phi(t,\sigma),\quad 0\leq j\leq N,\end{equation} where $(z_j)$ is the configuration given by \eqref{def:sol-exacte}, and where the perturbation $\Phi$ satisfies $$\Phi(t,\sigma)\to 1\quad \text{as } \sigma\to \pm \infty\quad \text{and  }\Phi(0,0)=0.$$ This ensures that the filaments are parallel at infinity and that they have the polygonial symmetry. Here, we consider a multiplicative perturbation of the exact parallel solution $X_j(t,\sigma)=(z_j(t),\sigma)$, while for anti-parallel pairs we dealt with an additive perturbation. Note also that, setting $z_1(0)=\rho e^{i\theta}$, we have
$$\Psi_1(t,\sigma)=\rho e^{i\theta + i \omega t} \Phi(t,\sigma).$$

The dynamics of symmetric configurations of vortex filaments  has been studied by Banica and Miot \cite{BaMi} through the analysis of the equation for the perturbation $\Phi$ 
\begin{equation}\label{BM}
i\partial_t \Phi+\partial_\sigma^2\Phi+\omega\frac{\Phi}{|\Phi|^2}(1-|\Phi|^2)=0.\end{equation} 
Collisions have been dispayed in \cite{BaMi} for the configurations yielded by the stationary polygon solution of the 2-D point vortex system \eqref{def:sol-exacte}. In particular, $\omega=0$ implies that such stationary configuration corresponds to the vertices of a regular polygon and its center with the non-trivial circulation $\Gamma_0=-\frac{N-1}2$. The case $\omega\neq 0$, namely the non-stationary rotating polygon solution of the 2-D point vortex system \eqref{def:sol-exacte}, with or without its center, was out of reach in \cite{BaMi}. In the present paper,  we are able to display collisions also in these cases. 

\medskip

Let us recall that in \cite{BaMi} it is proved that for all $\omega>0$, \eqref{BM} admits a unique global solution with finite and small energy, namely such that $$E(\Phi(t)):=\int_{\R} |\nabla \Phi(t)|^2\,dx + \omega  \int_{\R}\left(-\ln|\Phi(t)|^2+|\Phi(t)|^2-1)\right)\,dx\leq \eta,$$ 
where $\eta$ is an absolute constant.
By Gagliardo-Nirenberg inequality, the small energy assumption guarantees that $\sup_{t\in \R}\| |\Phi(t)|-1\|_{L^\infty}<1/2$; hence $\Phi$ does not vanish on $\R$.
Here, we will prove that for any $\omega \in \R$ there exists a finite energy solution of \eqref{BM} that vanishes at $(t,\sigma)=(0,0)$:
\begin{theorem} Let $\omega\in \R$. There exists $\tilde{K}_0\geq \tilde{K}>1$ such that the following holds. Let $\alpha>\tilde{K}_0$.
\label{thm:main-pol}
Let $$H(t,\sigma)=\sqrt{t}u\left(\frac{\sigma}{\sqrt{t}}\right),$$ where $u$ is given in Theorem \ref{thm:ak-pol} for this value of $\alpha$.  Then there exists $\tilde{t}_0\in (0,1)$, depending only on $\alpha$ and $\omega$, such that for all $\rho>0$ and $\theta\in \R$, there exists a solution $\Psi_1:(0,t_0]\times\R\to \mathbb{C}$ to the equation {\eqref{eqpsi-pol},}
which has the form
\begin{equation}
\Psi_1(t,\sigma)=\rho e^{i\theta + i\omega t}\left(r(t,\sigma)+\frac{H(t,\sigma)}{1+\psi(\alpha |\sigma|)}\right),
\end{equation}
where $\psi(\tau)=\tau \varphi(\tau)$, with $\varphi:\R_+\to [0,1]$ 
a smooth, cut-off function satisfying $\varphi=0$ on $[0,1]$ and $\varphi=1$ on $[2,+\infty)$, and where $r\in C([0,t_0], H^1(\R))$ satisfies  $\|r(t)\|_{L^\infty} < \sqrt{t}/4$ as $t\to 0$. Finally, we have
\begin{equation*}
 \left|\Psi_1(t,\sigma)\right|\geq
\begin{cases}\displaystyle  \frac{\rho}{4}(\sqrt{t}+\alpha |\sigma|)\quad \text{if}\quad \alpha |\sigma|\leq \frac 12,\\\,\\
\displaystyle \frac{\rho}{32}\quad \text{if}\quad \alpha |\sigma| \geq \frac 12.
\end{cases}
\end{equation*}

In particular, 
$$|\Psi_1(t,\sigma)|>0,\quad \forall (t,\sigma)\in (0,t_0]\times \R,\quad \Psi_1(0,0)=0,$$
and for all $t\in (0,t_0]$
$$\Psi_1(t,\sigma)\to z_1(t),\quad \text{as } \sigma \to \pm \infty.$$

\end{theorem}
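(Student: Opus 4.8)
The plan is to construct $\Psi_1$ via a fixed-point argument on the perturbation $r$, exactly paralleling the proof of Theorem \ref{thm:main} but adapted to the multiplicative ansatz and the polygonal nonlinearity $\omega u/|u|^2$. First I would fix $\alpha>\tilde K_0$ and $\omega\in\R$, let $H(t,\sigma)=\sqrt t\,u(\sigma/\sqrt t)$ with $u$ from Theorem \ref{thm:ak-pol}, and set $G(t,\sigma)=H(t,\sigma)/(1+\psi(\alpha|\sigma|))$. Since the equation \eqref{eqpsi-pol} is invariant under multiplication by the constant unimodular factor $\rho e^{i\theta+i\omega t}$ up to the rotation already absorbed in \eqref{def:sol-exacte}, it suffices to solve for $\Phi(t,\sigma)=r(t,\sigma)+G(t,\sigma)$ the renormalized equation; writing out the Duhamel formula, $\Phi$ is the mild solution
\begin{equation*}
\Phi(t,\sigma)=G(0,\sigma)+e^{it\partial_\sigma^2}\bigl(G(0,\sigma)-1\bigr)+i\omega\int_0^t e^{i(t-s)\partial_\sigma^2}\Bigl(\frac{\Phi}{|\Phi|^2}-\Phi\Bigr)(s)\,ds,
\end{equation*}
where I have separated the leading linear evolution of the corner profile $G(0,\sigma)=\alpha|\sigma|/(1+\psi(\alpha|\sigma|))$ from the Duhamel term. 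The unknown is then $r=\Phi-G$, sought in the same functional space $X$ (cf.\ \eqref{def:X-1}, \eqref{def:X-2}) used for the anti-parallel case, equipped with the norm controlling $\|r(t)\|_{H^1}$ and enforcing $\|r(t)\|_{L^\infty}<\sqrt t/4$ near $t=0$.

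The second step is to establish the two key pointwise lower bounds on $G$ and $H$ that replace the real-part estimates of the anti-parallel case. From the lower bound $|u(x)|\geq 1$ in \eqref{poscond-pol} together with the asymptotics $u(x)\sim\alpha|x|$, I would derive, as in the analogues of Lemmas \ref{lemma:H} and \ref{lemma:H-r}, that $|H(t,\sigma)|\geq c(\sqrt t+\alpha|\sigma|)$ in the inner region $\alpha|\sigma|\leq 1/2$ and $|G(t,\sigma)|$ is bounded below by an absolute constant in the outer region $\alpha|\sigma|\geq 1/2$, where the cut-off $\psi$ has activated and $G$ is comparable to $H/(\alpha|\sigma|)\sim$ constant. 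The magnitude $|u|$ here plays the role that $\re u$ played before, so the modulus bounds \eqref{poscond-pol} are precisely what make the singular term $\Phi/|\Phi|^2$ integrable; the target inequality on $|\Psi_1|$ then follows by multiplying through by $\rho$, provided the perturbation $r$ has been controlled so that $|\Phi|\geq\tfrac12|G|$ on the support where it matters.

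The third step is the contraction estimate. On a small time interval $(0,\tilde t_0]$ and on the ball of radius controlled by the $X$-norm, I would show the map $r\mapsto$ (Duhamel term) $-(e^{it\partial_\sigma^2}-1)(G(0)-1)$ is a contraction. The crucial nonlinear estimate is that $\Phi/|\Phi|^2-\Phi$ lies in $L^1((0,\tilde t_0),L^2(\R))$ with small norm: in the inner region one uses $|\Phi|\gtrsim\sqrt t$ so that $|\Phi|^{-1}\lesssim t^{-1/2}$, which is integrable against $ds$ after pairing with the $\sqrt t$ gain from the self-similar scaling; in the outer region $|\Phi|$ is bounded below by a constant, so $\Phi/|\Phi|^2-\Phi$ is smooth and small. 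Lipschitz dependence of $w\mapsto w/|w|^2$ away from the lower bound on $|w|$ gives the difference estimate, and the smallness of $\tilde t_0$ (depending only on $\alpha$ and $\omega$, since these fix the profile $u$ and the coupling constant) absorbs the constants. The main obstacle I anticipate is exactly this nonlinear control: unlike $1/\re(u)$, the denominator $|u|^2$ is quadratic and the nonlinearity $w/|w|^2$ is not holomorphic, so the difference estimate must be carried out using the real bilinear structure of $|w|^2$ and the a priori lower bound $|\Phi|\geq\tfrac12|G|$, which itself must be bootstrapped from the smallness of $r$ in $L^\infty$. Once the fixed point $r$ is obtained, the stated regularity $r\in C([0,\tilde t_0],H^1)$, the bound $\|r(t)\|_{L^\infty}<\sqrt t/4$, the modulus lower bound on $|\Psi_1|$, the parallelism $\Psi_1(t,\sigma)\to z_1(t)$ as $\sigma\to\pm\infty$ (from $G\to\alpha|\sigma|/(1+\psi)\to 1$ and $r(t)\in H^1$), and the collision $\Psi_1(0,0)=0$ all follow directly by tracking the corresponding properties through the construction.
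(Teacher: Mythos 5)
Your proposal follows essentially the same route as the paper: a fixed-point argument for the perturbation $r$ in the same weighted space $X$, with the modulus bounds $|u|\geq \re(u)$ from Theorem \ref{thm:ak-pol} replacing the real-part bounds of the anti-parallel case, the same source term $b$ from the cut-off, and the same lower bounds on $|r+H/(1+\psi(\alpha|\sigma|))|$ yielding the stated estimate on $|\Psi_1|$. The one ``obstacle'' you anticipate --- the non-holomorphy of $w\mapsto w/|w|^2$ --- is dissolved in the paper by simply writing $\Phi/|\Phi|^2=1/\overline{\Phi}$, so that $\tilde a(r_1)-\tilde a(r_2)$ takes the same clean quotient form as in the anti-parallel case and the estimates of Lemma \ref{est} carry over almost verbatim.
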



\begin{corollary}
With the notations of the previous theorem, the filaments $(\Psi_j)$ with $\Psi_j=e^{i(j-1)\frac{2\pi}{N}  }\Psi_1, \forall \,1\leq j\leq N, \Psi_0=0$ are solutions to \eqref{KMD}, satisfying the parallelism condition at infinity \eqref{cond:parallel}, and colliding at time $t=0$ at position $\sigma=0$.
\end{corollary}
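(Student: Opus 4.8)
The analytic work is already contained in Theorem \ref{thm:main-pol}: the map $\Psi_1$ it produces solves the scalar equation \eqref{eqpsi-pol}, is nonzero for $t>0$, vanishes at $(0,0)$, and tends to $z_1(t)$ at spatial infinity. The plan is therefore to show that what remains is a purely algebraic fact, namely that the rotationally symmetric ansatz \eqref{hyp:sym} closes the coupled system \eqref{KMD} into the single equation \eqref{eqpsi-pol}, and then to read off collision and parallelism directly. Writing $\zeta=e^{2i\pi/N}$, so that $\Psi_j=\zeta^{j-1}\Psi_1$ for $1\le j\le N$ and $\Psi_0=0$, with $\Gamma_j=1$ for $j\ge1$ and center circulation $\Gamma_0$, I would first substitute into the $j$-th equation of \eqref{KMD}, isolating the $k=0$ interaction term via $\Psi_0=0$:
\[
i\partial_t\Psi_j+\partial_\sigma^2\Psi_j+\Gamma_0\frac{\Psi_j}{|\Psi_j|^2}+\sum_{\substack{1\le k\le N\\ k\ne j}}\frac{\Psi_j-\Psi_k}{|\Psi_j-\Psi_k|^2}=0.
\]
Since $|\zeta^{j-1}|=1$, the time-derivative, the Laplacian and the self-term each carry the common factor $\zeta^{j-1}$; the only issue is whether the interaction sum does as well.

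The heart of the matter is that interaction sum. Using $w/|w|^2=1/\bar w$ together with $\Psi_j-\Psi_k=(\zeta^{j-1}-\zeta^{k-1})\Psi_1$, each term factors as
\[
\frac{\Psi_j-\Psi_k}{|\Psi_j-\Psi_k|^2}=\frac{\zeta^{j-1}-\zeta^{k-1}}{|\zeta^{j-1}-\zeta^{k-1}|^2}\,\frac{\Psi_1}{|\Psi_1|^2},
\]
and pulling out $\zeta^{j-1}$ and reindexing the sum over the nonzero residues modulo $N$ reduces the geometric factor to a constant independent of $j$:
\[
\sum_{\substack{1\le k\le N\\ k\ne j}}\frac{\zeta^{j-1}-\zeta^{k-1}}{|\zeta^{j-1}-\zeta^{k-1}|^2}=\zeta^{j-1}\sum_{m=1}^{N-1}\frac{1}{1-\zeta^{m}}=\frac{N-1}{2}\,\zeta^{j-1}.
\]
I would justify the last equality by the classical identity $\sum_{m=1}^{N-1}(1-\zeta^m)^{-1}=(N-1)/2$, read off from the logarithmic derivative of $\prod_{m=1}^{N-1}(z-\zeta^m)=1+z+\cdots+z^{N-1}$ at $z=1$. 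Hence the interaction sum equals $\tfrac{N-1}{2}\,\Psi_j/|\Psi_j|^2$, and after dividing the $j$-th equation by $\zeta^{j-1}$ every index $1\le j\le N$ collapses to
\[
i\partial_t\Psi_1+\partial_\sigma^2\Psi_1+\Big(\Gamma_0+\tfrac{N-1}{2}\Big)\frac{\Psi_1}{|\Psi_1|^2}=0,
\]
which is exactly \eqref{eqpsi-pol} because $\omega=\tfrac{N-1}{2}+\Gamma_0$. For the center equation $j=0$ I would note that $\Psi_0\equiv0$ kills the derivative terms while $\sum_{k=1}^N\Psi_k/|\Psi_k|^2=\big(\sum_{k=1}^N\zeta^{k-1}\big)\Psi_1/|\Psi_1|^2=0$, so it holds automatically. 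This algebraic closure — and in particular that the structural constant is precisely $(N-1)/2$, matching the definition of $\omega$ — is the only real point of the proof, and the place where the polygonal symmetry genuinely intervenes.

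It then remains to transfer the three qualitative properties. By the reduction above, $\Psi_1$ solving \eqref{eqpsi-pol} yields a genuine solution $(\Psi_j)_{0\le j\le N}$ of \eqref{KMD}. For the parallelism \eqref{cond:parallel} I would use $\Psi_1(t,\sigma)\to z_1(t)=\rho e^{i\theta+i\omega t}$ from Theorem \ref{thm:main-pol}, whence $\Psi_j\to\zeta^{j-1}z_1(t)=z_j(t)$ and $\Psi_0\to0=z_0(t)$, with $(z_j)$ the rotating polygon \eqref{def:sol-exacte}. For the collision, at $t=0$ one has $\Psi_j(0,0)=\zeta^{j-1}\Psi_1(0,0)=0$ for every $j$, so all filaments pass through the origin simultaneously; while for $0<t\le t_0$ the differences $\Psi_j-\Psi_k=(\zeta^{j-1}-\zeta^{k-1})\Psi_1$ and $\Psi_j-\Psi_0=\Psi_j$ are nonzero, since the $\zeta^{j-1}$ are distinct and $|\Psi_1|>0$, so no collision occurs before $t=0$. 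Thus the collision takes place exactly at $(t,\sigma)=(0,0)$, completing the proof.
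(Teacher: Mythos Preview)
Your proof is correct and follows exactly the route the paper intends: the corollary is stated without proof because the algebraic reduction of \eqref{KMD} under the polygonal ansatz \eqref{hyp:sym} to the single equation \eqref{eqpsi-pol} is asserted earlier in the introduction (``This symmetry assumption is preserved in time. The system \eqref{KMD} is then written as a single equation''), and you have supplied precisely that verification together with the immediate transfer of the collision and parallelism properties from Theorem~\ref{thm:main-pol}. Your computation of the structural constant via $\sum_{m=1}^{N-1}(1-\zeta^m)^{-1}=(N-1)/2$ is the standard one and matches the paper's definition of $\omega$.
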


\begin{remark} In view of the estimates for $H$ established in {Subsection \S}\ref{sub:estim-H} the perturbation
$\Phi(t,\sigma)=r(t,\sigma)+{H(t,\sigma)}/({1+\psi(\alpha |\sigma|)})$ has finite (and possibly large) energy for all $t\in [0,t_0]$.
\end{remark}

The plan of this paper is the following. We devote next section to the proof of Theorem \ref{thm:main} by a fixed point argument. In Section \S\ref{sec:extension}, we sketch the proofs of Theorems \ref{thm:ak-pol} and \ref{thm:main-pol}, which are obtained as quite straightforward extensions of Theorem \ref{thm:ak} (that is, Theorem 3.1 in \cite{BFM}) and of Theorem \ref{thm:main} respectively. The last section gathers a collection of useful estimates that are needed for the fixed point argument in \S\ref{sec:main}.

\medskip

\textbf{Notation.} Throughout the paper, $C$ denotes a numerical constant that can possibly change from a line to another.

\section{Proof of Theorem \ref{thm:main}}\label{sec:main}
\subsection{The fixed point framework}

Let $K_0>K>1$, to be determined later, and let $\alpha>K_0$.
Let $t_0\in(0,1)$ to be determined later on in terms of $\alpha$. In all the following we denote by $I$ the interval $$I=\left(-\frac{1}{2\alpha},\frac{1}{2\alpha}\right).$$
We define the space
\begin{equation}\label{def:X-1}X=\left\{ r\in C([0,t_0], H^1(\R))\:|\quad \|r\|_{X}<\infty\right\},\end{equation}
where
\begin{equation}\label{def:X-2}\|r\|_{X} =\sup_{t\in (0,t_0]}
\left(\frac{\|r\|_{L^\infty((0,t),L^2)}}{ t^{3/4}}+\frac{\|\partial_\sigma r\|_{L^\infty((0,t),L^2)}}{ t^{\gamma}}+8\frac{\|r\|_{L^\infty((0,t)\times I)}}{t^{1/2}}\right).\end{equation}
The parameter $\gamma$ satisfies $0<\gamma<1/4$.

\bigskip

We look for a solution $$\Psi_1(t,\sigma)=-it + r(t,\sigma)+\frac{H(t,\sigma)}{1+\psi(\alpha |\sigma|)}$$ to the equation \eqref{eqpsi}. Hence we look for a solution $r$ to the equation
\begin{equation}\label{eq:r}
 i\partial_t r+\partial_\sigma^2 r=a(r)+b,
\end{equation}
where \begin{equation}\label{eq:source-1}
\begin{cases}
\displaystyle a(r)=\frac{1}{\re (r)+\frac{\re(H)}{1+\psi(\alpha |\sigma|)}}-1-\frac{1}{1+\psi(\alpha |\sigma|)}\frac{1}{\re (H)}\\ \\
\displaystyle b=-2 \ds H \partial_\sigma \left(\frac{1}{1+\psi(\alpha |\sigma|)}\right)-H\partial_\sigma^2\left(\frac{1}{1+\psi(\alpha |\sigma|)}\right).
\end{cases}
\end{equation} We choosed to gather in the expression of $a(r)$ also two source terms, in order to obtain a good control of $a(r)$, both near the origin and at infinity.

In order to solve \eqref{eq:r} we shall perform a fixed point argument in the subspace $B$ of $X$
\begin{equation}\label{def:B}B=\{r\in X\:| \quad \|r\|_X\leq 1\}\end{equation}
for the operator $A=A_a+A_b$, where
\begin{equation}
\begin{split}
&\label{op}A_a(r)(t,\sigma)=-i\int_0^t e^{i(t-s)\partial_\sigma^2}a(r)(s,\sigma)ds,\quad \mbox{and}\quad\\
& A_b(r)(t,\sigma)=-i\int_0^t e^{i(t-s)\partial_\sigma^2}b(s,\sigma)ds.
\end{split}
\end{equation}
To obtain the contraction for our fixed point argument, we shall also need to estimate
$$A(r_1)-A(r_2)=-i\int_0^t e^{i(t-s)\partial_\sigma^2}[a(r_1)-a(r_2)](s,\sigma)ds,$$
with 
$$a(r_1)-a(r_2)=\frac{\re(r_1-r_2)}{\left(\re (r_1)+\frac{\re(H)}{1+\psi(\alpha |\sigma|)}\right)\left(\re (r_2)+\frac{\re(H)}{1+\psi(\alpha |\sigma|)}\right)}.$$

Let us say a few words about the choice of the space $X$.
By definition of $B$, we have the pointwise estimate on $I$ 
\begin{equation}\label{ineq:upper-r-1} {\|r\|_{L^\infty([0,t]\times I)}<\frac{\sqrt{t}}{4},\quad \forall t\in [0,t_0],}\end{equation}
while outside $I$ we have by the Gagliardo-Nirenberg inequality
\begin{equation*}
\|r(t)\|_{L^\infty(\R\setminus I)}\leq \sqrt{2}\|r(t)\|_{L^2}\|\partial_\sigma r\|_{L^2}\leq \sqrt{2}t_0^{\frac{3}{4}+\gamma}.
\end{equation*}
Therefore if $t_0$ is smaller than an absolute constant we have 
\begin{equation}\label{ineq:upper-r}
 \|r\|_{L^\infty([0,t_0]\times\R)}<\frac{1}{32}.
\end{equation}
This will enable us (see \eqref{ineq:near-1}-\eqref{ineq:far-1}) to show that $$|r(t,\sigma)|\leq \frac{1}{2} \left(\frac{\re(H(t,\sigma))}{1+\psi(\alpha |\sigma|)}\right),$$ so $r$ will be negligible with respect to $\frac{\re(H(t,\sigma))}{1+\psi(\alpha |\sigma|)}$ at the denominator in \eqref{eq:source-1}. As a consequence we shall obtain in Lemma \ref{est} precise estimates, as for instance pointwise estimates on $a(r)$.\\

We start with a lemma containing the estimate that will allow us to perform the fixed point argument.

\begin{lemma}\label{fixedpointlemma}
We have the following control:
\begin{equation*}
\begin{split}
&\left\|\int_0^t e^{i(t-s)\partial_\sigma^2}F(s,\sigma)ds\right\|_X\\
&\leq C\sup_{t\in (0,t_0]}
\left(\frac{\|F\|_{L^1((0,t),L^2(\R))}}{t^\frac 34}
+\frac{\|\partial_\sigma F\|_{L^\frac 43 ((0,t),L^1(I))}+\|\partial_\sigma F\|_{L^1((0,t),L^2(\mathbb R\setminus I))}}{t^\gamma}\right)\\
&+C\sup_{t\in (0,t_0]}
\left(\frac{\int_0^t \|F(s)\|_{L^1(\frac{-1}\alpha,\frac 1\alpha)}(t-s)^{-\frac 12}\,ds}{t^\frac 12}\right)\\
&+C\sup_{t\in (0,t_0]}
\left(\frac{\int_0^t
(\alpha^\frac 32\|F(s)\|_{L^2(\R)}+\alpha^\frac 12\|\partial_\sigma F(s)\|_{L^2(\R)})(t-s)^\frac 12\,ds}{t^\frac12}\right).\end{split}
\end{equation*}
\end{lemma}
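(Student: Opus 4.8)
The plan is to estimate the Duhamel term $u(t,\sigma)=\int_0^t e^{i(t-s)\partial_\sigma^2}F(s,\sigma)\,ds$ separately in the three pieces that make up the $X$-norm \eqref{def:X-2}: the $L^2$ part, the $\partial_\sigma L^2$ part, and the pointwise part on $I$. Each piece calls for a different tool: unitarity of the Schr\"odinger group for the first, inhomogeneous Strichartz estimates for the second, and the explicit dispersive kernel together with a stationary/non-stationary phase dichotomy for the third. A recurring bookkeeping point will be that the $X$-norm is a supremum of a ratio $(\,\cdot\,)/t^{\beta}$; I will exploit this by allowing, for each fixed evaluation time $s\le t$, a \emph{replacement of the outer time $t$ by $s$} (using $t^{\beta}\ge s^{\beta}$) or, when the relevant kernel is monotone, by \emph{extending the inner integration range from $s$ to $t$}. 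This is what lets the pointwise-in-time quantities on the left be dominated by the corresponding suprema on the right.

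For the $L^2$ part, Minkowski's inequality and the unitarity of $e^{i\tau\partial_\sigma^2}$ on $L^2$ give $\|u(s)\|_{L^2}\le\int_0^s\|F(s')\|_{L^2}\,ds'\le\|F\|_{L^1((0,t),L^2)}$ for every $s\le t$, which after dividing by $t^{3/4}$ yields the first term on the right. For the $\partial_\sigma L^2$ part, I differentiate under the integral and split the source as $\partial_\sigma F=(\partial_\sigma F)\mathbf 1_I+(\partial_\sigma F)\mathbf 1_{\R\setminus I}$. The contribution of $\mathbf 1_{\R\setminus I}$ is handled by the same energy estimate, producing $\|\partial_\sigma F\|_{L^1((0,t),L^2(\R\setminus I))}$. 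The contribution of $\mathbf 1_I$ is controlled by the inhomogeneous Strichartz estimate with output pair $(q,r)=(\infty,2)$ and input pair dual to $(\tilde q,\tilde r)=(4,\infty)$: in dimension one both $(\infty,2)$ and $(4,\infty)$ are admissible (they satisfy $\tfrac2q+\tfrac1r=\tfrac12$), so $\|\partial_\sigma u\|_{L^\infty((0,t),L^2)}\lesssim\|\partial_\sigma F\,\mathbf 1_I\|_{L^{4/3}((0,t),L^1)}$. Dividing by $t^{\gamma}$ (which simply cancels) gives the second group of terms.

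The pointwise estimate on $I$ is the crux. Writing $e^{i\tau\partial_\sigma^2}g(\sigma)=\frac{c}{\sqrt{\tau}}\int_\R e^{i|\sigma-y|^2/(4\tau)}g(y)\,dy$, I bound $|u(s,\sigma)|\le\int_0^s\bigl|e^{i(s-s')\partial_\sigma^2}F(s',\cdot)(\sigma)\bigr|\,ds'$ for $\sigma\in I$, and decompose the source at scale $1/\alpha$ into a \emph{near} part supported in $(-1/\alpha,1/\alpha)$ and a \emph{far} part supported in $\{|y|\ge 1/\alpha\}$. For the near part I use the one-dimensional dispersive bound $\|e^{i\tau\partial_\sigma^2}g\|_{L^\infty}\le C\tau^{-1/2}\|g\|_{L^1}$, producing $\int_0^s(s-s')^{-1/2}\|F(s')\|_{L^1(-1/\alpha,1/\alpha)}\,ds'$; since this kernel is decreasing, I use $t^{1/2}\ge s^{1/2}$ to replace $t$ by $s$ and recognize the third term on the right. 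For the far part I use that $\sigma\in I$ and $|y|\ge1/\alpha$ force $|y-\sigma|\ge 1/(2\alpha)$, so the phase is non-stationary, and I integrate by parts once via $L=\frac{2\tau}{i(y-\sigma)}\partial_y$. The two resulting integrals, estimated by Cauchy--Schwarz over the far region $\int_{|y-\sigma|\ge1/(2\alpha)}(y-\sigma)^{-4}\,dy\sim\alpha^3$ and $\int_{|y-\sigma|\ge1/(2\alpha)}(y-\sigma)^{-2}\,dy\sim\alpha$, produce exactly the weights $\alpha^{3/2}\|F\|_{L^2}$ and $\alpha^{1/2}\|\partial_\sigma F\|_{L^2}$, while the prefactor $\tfrac{1}{\sqrt\tau}$ times the $\tau$ gained in the integration by parts yields the factor $(s-s')^{1/2}$. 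Because this kernel is increasing in $s$, I extend the integration range from $s$ to $t$ and divide by $t^{1/2}$ to obtain the last term on the right.

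I expect the far-field pointwise estimate to be the main obstacle: extracting the precise $\alpha$-weights and the $(t-s)^{1/2}$ gain requires the integration-by-parts argument to be set up with a cutoff whose scale is tuned so that the near region is exactly $(-1/\alpha,1/\alpha)$ and the gap from $I$ to the far support stays bounded below by $\tfrac1{2\alpha}$; the derivative falling on the cutoff contributes an extra $\alpha\|F\|_{L^2}$, which is what merges into the $\alpha^{3/2}\|F\|_{L^2}$ term and must be tracked carefully so that no uncontrolled boundary contribution survives. The secondary subtlety, worth isolating as a lemma-internal remark, is the one-dimensional admissibility of the $(4,\infty)$ Strichartz pair used for the $\partial_\sigma$ estimate, since it is the non-endpoint structure special to $d=1$ that makes the $L^{4/3}_tL^1_\sigma$ input admissible.
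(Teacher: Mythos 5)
Your treatment of the first two components of the $X$-norm coincides with the paper's: mass conservation plus Minkowski for the $L^2$ piece, and the splitting $\partial_\sigma F=(\partial_\sigma F)\mathds{1}_I+(\partial_\sigma F)\mathds{1}_{\R\setminus I}$ combined with the Yajima inhomogeneous Strichartz estimate $L^{4/3}_tL^1_\sigma\to L^\infty_tL^2_\sigma$ (your admissibility check of the dual pair $(4,\infty)$ in $d=1$ is correct, and it is the estimate the paper cites). Where you genuinely diverge is the pointwise bound on $I$, and your route is valid. The paper localizes the \emph{output}: it multiplies the solution by $\chi=(1-\varphi)(2\alpha|\cdot|)$ and expands the commutator of $\chi$ with the propagator via a Duhamel identity, so the error terms $\partial_\sigma^2\chi\, e^{i(\tau-s)\partial_\sigma^2}F$ and $\partial_\sigma\chi\,\partial_\sigma e^{i(\tau-s)\partial_\sigma^2}F$ are handled by dispersion, Cauchy--Schwarz, unitarity and $\|\partial_\sigma^p\chi\|_{L^2}\leq C\alpha^{p-1/2}$, the inner $\tau$-integration of $(t-\tau)^{-1/2}$ producing the $(t-s)^{1/2}$ gain. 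You localize the \emph{input}: you cut the source at scale $1/\alpha$ and, for $\sigma\in I$ and $y$ in the far region, exploit the non-stationarity of the phase $(y-\sigma)^2/4\tau$ to gain a factor $\tau$ by one integration by parts, with $\bigl(\int(y-\sigma)^{-4}\bigr)^{1/2}\sim\alpha^{3/2}$ and $\bigl(\int(y-\sigma)^{-2}\bigr)^{1/2}\sim\alpha^{1/2}$ delivering precisely the weights in the last term; both routes land on identical kernels, and your $t\mapsto s$ replacement and range-extension bookkeeping for the suprema is exactly what is also implicit in the paper. Two small points to tidy up: the transition cutoff must be smooth (a sharp indicator at $|y|=1/\alpha$ leaves an uncontrolled boundary evaluation $|F(s,\pm1/\alpha)|$), which you correctly flag; and the term where the derivative falls on that cutoff contributes, after Cauchy--Schwarz of $|\eta'|/|y-\sigma|$ over the transition annulus, $C\alpha^{3/2}\|F\|_{L^2}$ rather than the $\alpha\|F\|_{L^2}$ you state --- it still merges into the $\alpha^{3/2}\|F(s)\|_{L^2}$ term as you claim, so this is an imprecision of exponent, not a gap. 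Arguably the paper's commutator identity is slightly more economical (no case analysis on the phase and no smooth partition of the source), while your argument is more self-contained in that it only uses the explicit kernel and elementary non-stationary phase.
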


\begin{proof} We recall first a few basics facts on  the free Schr\"odinger evolution. 
The mass is conserved,
\begin{equation}\label{consmass}
\|e^{it\partial_\sigma^2}f\|_{L^2}=\|f\|_{L^2},
\end{equation}
and the dispersion inequality 
\begin{equation}\label{disp}
\|e^{it\partial_\sigma^2}f\|_{L^\infty}\leq\frac{C}{\sqrt{t}}\|f\|_{L^1},
\end{equation}
is valid, together with inhomogeneous Strichartz estimate (\cite{Ya}):
\begin{equation}\label{Str}
\left\|\int_0^t e^{i(t-s)\partial_\sigma^2}F(s)ds\right\|_{L^\infty((0,t_0),L^2)}\leq C\|F\|_{L^\frac 43((0,t_0),L^1)}.
\end{equation}

We denote by $A_F$ the Duhamel term to be estimated:
$$A_F(t)= \int_0^t e^{i(t-s)\partial_\sigma^2}F(s,\sigma)ds.$$
By the mass conservation \eqref{consmass} we obtain for $t\in(0,t_0]$
\begin{equation}\label{L2}
\begin{split}
\|A_F(t)\|_{L^2}
\leq C\|F\|_{L^1((0,t),L^2)}.\end{split}
\end{equation}

For the estimate of the gradient we use the conservation of mass \eqref{consmass} and the Strichartz estimate \eqref{Str}:
\begin{equation}\label{gradL2}
\begin{split}
\|\partial_\sigma A_F(t)\|_{L^2}&=\left\|\int_0^t e^{i(t-s)\partial_\sigma^2}\partial_\sigma F(s)\,ds\right\|_{L^2}
\\
&\leq C\left\|\partial_\sigma F\right\|_{L^\frac 43 ((0,t),L^1(I))}+C\left\|\partial_\sigma F\right\|_{L^1((0,t),L^2(\mathbb R\setminus I))}.
\end{split}
\end{equation}
Here we used $\partial_\sigma F = (\partial_\sigma F) \mathds{1}_I + (\partial_\sigma F) \mathds{1}_{\mathbb R\setminus I}$.

Finally we turn to the pointwise estimates near the origin, that are more delicate. We set $$\chi(\sigma)=(1-\varphi)(2\alpha |\sigma|),$$
where $\varphi$ is the cut-off function defined in Theorem \ref{thm:main}. In particular, $\chi$ is valued between $0$ and $1$, values $1$ on $I$ and vanishes outside $(-1/\alpha,1/\alpha)$. Since
\begin{equation*}
\left\|A_F(t)\right\|_{L^\infty(I)}
\leq 
\left\|\int_0^t \chi [e^{i(t-s)\partial_\sigma^2}F(s)]\,ds\right\|_{L^\infty},
\end{equation*}
we shall use the following commutator formula between a localization and a free Schr\"odinger evolution. 
For all $s,t\in \R$
\begin{equation*}
\chi e^{i(t-s)\partial_\sigma^2}f=e^{i(t-s)\partial_\sigma^2}[\chi f]
-i\int_s^t e^{i(t-\tau)\partial_\sigma^2}\left[\partial_\sigma^2\chi
 \,\,e^{i(\tau-s)\partial_\sigma^2}f+2\partial_\sigma\chi\,\,\partial_\sigma e^{i(\tau-s)\partial_\sigma^2}f\right]\,d\tau.
\end{equation*}
Using this with $s$ fixed and $f(\sigma)=F(s,\sigma),$ we get 
\begin{equation*}
\begin{split}
&\left\|A_F(t)\right\|_{L^\infty(I)}
\leq C\left\|\int_0^t e^{i(t-s)\partial_\sigma^2}[\chi F](s)\,ds\right\|_{L^\infty}\\
&+C\int_0^t \left\|\int _s^t e^{i(t-\tau)\partial_\sigma^2}\left[ \partial_\sigma^2\chi\,\,e^{i(\tau-s)\partial_\sigma^2}F(s)  
+2\partial_\sigma\chi\,\,\partial_\sigma (e^{i(\tau-s)\partial_\sigma^2}F(s))\right]\,d\tau\right\|_{L^\infty}\,ds.
\end{split}
\end{equation*}
Now we use the dispersion inequality \eqref{disp} to obtain
\begin{equation*}
\begin{split}
&\left\|A_F(t)\right\|_{L^\infty(I)}
\leq C\int_0^t \frac{1}{\sqrt{t-s}}\|F(s)\|_{L^1(-1/\alpha,1/\alpha)}\,ds\\
&+C\int_0^t\int_s^t \frac{1}{\sqrt{t-\tau}}
\left(\left\|\partial_\sigma^2\chi\,\,e^{i(\tau-s)\partial_\sigma^2}F(s) \right\|_{L^1}+\left\|\partial_\sigma\chi\,\,\partial_\sigma (e^{i(\tau-s)\partial_\sigma^2}F(s))\right\|_{L^1}\right)\,d\tau\,ds.
\end{split}
\end{equation*}
Finally, by Cauchy-Schwarz inequality, we have
\begin{equation}\label{locLinfty}
\begin{split}
&\left\|A_F(t)\right\|_{L^\infty(I)}
\leq C\int_0^t \frac{1}{\sqrt{t-s}}\|F(s)\|_{L^1(-1/\alpha,1/\alpha)}\,ds\\
&+C\int_0^t\sqrt{t-s}
\left(\alpha^\frac 32\|F(s)\|_{L^2}+\alpha^\frac 12\|\partial_\sigma F(s)\|_{L^2}\right)\,ds\end{split}
\end{equation}
as 
$$
\|\partial_\sigma^p  \chi\|_{L^2} \leq C \alpha^{p - 1/2} \|\partial_\sigma^p\varphi\|_{L^2}. 
$$
Gathering \eqref{L2}, \eqref{gradL2} and \eqref{locLinfty} we obtain the lemma. 
\end{proof}

Next we gather in the following lemma several estimates that will allow us to end the fixed point argument. 
\begin{lemma}\label{est} Let $r\in B$ and $0\leq s\leq t_0$.

\textbullet\: The following pointwise estimates hold:
\begin{equation}\label{ineq:a-1}
|a(r)(s,\sigma)|\leq 1+C\frac{|r(s,\sigma)|}{(\sqrt{s}+\alpha |\sigma|)^2}\quad \mbox{ if } \sigma\in I,
\end{equation}
\begin{equation}\label{ineq:a-2}
|a(r)(s,\sigma)|\leq C\left(\frac{\alpha}{1+\psi(\alpha |\sigma|)}+|r(s,\sigma)|\right)\quad \mbox{ if } \sigma\in \R\setminus I.
\end{equation}

\textbullet\: For $a(r)$ the following norm estimates hold:
\begin{equation}\label{aL2}
\|a(r)(s)\|_{L^2(\mathbb R)}
\leq C(\alpha^{\frac 12}+\alpha^{-\frac 12} s^{-\frac 14}),
\end{equation}
\begin{equation}\label{aL1}
\|a(r)(s)\|_{L^1(\frac{-1}\alpha,\frac 1\alpha)}\leq C \alpha^{-\frac 12},
\end{equation}
\begin{equation}\label{gradaL2}
\|\partial_\sigma a(r)(s)\|_{L^2(\mathbb R\setminus I)}\leq C(s^\gamma+\alpha^\frac 12),
\end{equation}
\begin{equation}\label{gradaL2I}
\|\partial_\sigma a(r)(s)\|_{L^2(I)}\leq C(s^{\gamma-1}+\alpha^\frac 12 s^{-\frac 34}),
\end{equation}
\begin{equation}\label{gradaL1}
\|\partial_\sigma a(r)(s)\|_{L^1(I)}\leq C(\alpha^{-\frac12} s^{\gamma-\frac 34}+s^{-\frac 12}).
\end{equation}

\textbullet\: For $b$ the following estimates hold:
\begin{equation}\label{bL2}
\|b(s)\|_{L^2(\mathbb R)}\leq C\alpha^\frac 32,
\end{equation}
\begin{equation}\label{bL1}
\|b(s)\|_{L^1(\frac{-1}\alpha,\frac 1\alpha)}=0,
\end{equation}
\begin{equation}\label{gradbL2}
\|\partial_\sigma b(s)\|_{L^2(\mathbb R\setminus I)}\leq C(\alpha^\frac 52+\alpha^\frac 32 s^{-\frac 12}),
\end{equation}
\begin{equation}\label{gradbL2I}
\|\partial_\sigma b(s)\|_{L^2(I)}=0,
\end{equation}
\begin{equation}\label{gradbL1}
\|\partial_\sigma b(s)\|_{L^1(I)}=0,
\end{equation}

\textbullet\: For $r_1, r_2\in B$ the following estimates hold:
\begin{equation}\label{cL2}
\|a(r_1)(s)-a(r_2)(s)\|_{L^2(\mathbb R)}\leq C\|r_1-r_2\|_{X}(\alpha^{-\frac12}  s^{-\frac14} +s^{\frac 34}),
\end{equation}
\begin{equation}\label{cL1}
\|a(r_1)(s)-a(r_2)(s)\|_{L^1(\frac{-1}\alpha,\frac 1\alpha)}\leq C\|r_1-r_2\|_{X}\alpha^{-\frac 12},
\end{equation}
\begin{equation}\label{gradcL2}
\|\partial_\sigma [a(r_1)-a(r_2)](s)\|_{L^2(\mathbb R\setminus I)}\leq C\|r_1-r_2\|_{X}(s^\gamma+\alpha^\frac 12s^{\frac 38+\frac\gamma 2}),
\end{equation}
\begin{equation}\label{gradcL2I}
\|\partial_\sigma [a(r_1)-a(r_2)](s)\|_{L^2(I)}\leq C\|r_1-r_2\|_X (s^{\gamma-1}+\alpha s^{-\frac 34}),
\end{equation}
\begin{equation}\label{gradcL1}
\|\partial_\sigma [a(r_1)-a(r_2)](s)\|_{L^1(I)}\leq C\|r_1-r_2\|_X \,( \alpha^{-\frac12}s^{\gamma-\frac 34} +s^{-\frac12}).
\end{equation}

\end{lemma}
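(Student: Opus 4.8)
The plan is to reduce every bound to a combination of (i) sharp pointwise control of the profile $H$ and its first two $\sigma$-derivatives, coming from Theorem~\ref{thm:ak}, and (ii) the a priori bounds on $r$ encoded in the set $B$. First I would record the consequences of Theorem~\ref{thm:ak}. Since $v-1\in E$ has $\|v-1\|_{L^\infty}\leq \alpha/4$, the representation \eqref{couplage} gives $\bigl|\int_{|x|}^{+\infty}\frac{v-1}{z^2}\,dz\bigr|\leq \frac{\alpha}{4|x|}$, whence $\re(\alpha+\int)\in[\tfrac34\alpha,\tfrac54\alpha]$ and $\re u(x)\geq 1+\tfrac34\alpha|x|$, $|u(x)|\leq 1+\tfrac54\alpha|x|$. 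After the self-similar rescaling $H(t,\sigma)=\sqrt t\,u(\sigma/\sqrt t)$ this yields the crucial lower bound
\[
\re H(t,\sigma)\geq \sqrt t+\tfrac34\alpha|\sigma|\geq \tfrac14(\sqrt t+\alpha|\sigma|),
\]
together with $|H|\leq \sqrt t+\tfrac54\alpha|\sigma|$, and, from $u-xu'=v$ and the profile equation $2u''=-iv+2/\re u$, the derivative bounds $|\partial_\sigma H|=|u'|\leq C\alpha$ and $|\partial_\sigma^2 H|=t^{-1/2}|u''|\leq C\alpha t^{-1/2}$. (These are precisely the estimates gathered in Lemmas~\ref{lemma:H} and \ref{lemma:H-r}.) On the other hand, $r\in B$ gives $\|r\|_{L^\infty([0,t]\times I)}<\sqrt t/4$, the global bound \eqref{ineq:upper-r}, and $\|r\|_{L^2}\leq s^{3/4}$, $\|\partial_\sigma r\|_{L^2}\leq s^{\gamma}$; combined with the lower bound on $\re H$ these force $|r|\leq \tfrac12\frac{\re H}{1+\psi(\alpha|\sigma|)}$ everywhere, so $r$ is negligible at the denominators.

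For the pointwise bounds I would split according to $I$ and $\R\setminus I$. On $I$ one has $\psi(\alpha|\sigma|)=0$, so $a(r)=-1-\frac{\re r}{(\re r+\re H)\re H}$; using $\re r+\re H\geq \tfrac12\re H$ and the lower bound on $\re H$ gives \eqref{ineq:a-1} with the weight $(\sqrt s+\alpha|\sigma|)^{-2}$. On $\R\setminus I$ I set $g=\frac{\re H}{1+\psi}$ and rewrite $a(r)=\frac{1}{\re r+g}-1-\frac{1}{(1+\psi)^2 g}$; there the lower bound forces $g\geq \tfrac{1}{12}$, and splitting $\frac{1}{\re r+g}-1=\frac{1-g}{g}-\frac{\re r}{g(\re r+g)}$ one checks $|1-g|\leq C(1+\alpha\sqrt s)/(1+\psi)\leq C\alpha/(1+\psi)$ from the tail bound on $\int_{|x|}^{+\infty}$, producing the two terms of \eqref{ineq:a-2}. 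The norm bounds \eqref{aL2}--\eqref{gradaL1} then follow by inserting these pointwise bounds and integrating: on $I$ (of length $\alpha^{-1}$) the decisive computation is of $\int_I(\sqrt s+\alpha|\sigma|)^{-k}\,d\sigma$ type, generating the negative powers of $\alpha$ and $s$, while on $\R\setminus I$ one uses integrability of $(1+\psi)^{-1}$ after the change of variable $\tau=\alpha\sigma$.

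The estimates on $b$ are essentially structural. Because $\varphi=0$ on $[0,1]$, the function $(1+\psi(\alpha|\sigma|))^{-1}$ is identically $1$ for $\alpha|\sigma|\leq 1$, so all its $\sigma$-derivatives vanish on $(-1/\alpha,1/\alpha)\supset I$; this gives \eqref{bL1}, \eqref{gradbL2I}, \eqref{gradbL1} immediately. For the $L^2$ bounds I would use $|\partial_\sigma^p(1+\psi)^{-1}|\leq C\alpha^p$ together with $|H|\leq \sqrt s+\tfrac54\alpha|\sigma|$, $|\partial_\sigma H|\leq C\alpha$, $|\partial_\sigma^2 H|\leq C\alpha s^{-1/2}$; after the substitution $\tau=\alpha\sigma$ the resulting integrals converge thanks to the $(1+\alpha|\sigma|)^{-2}$ decay of the derivatives, yielding the powers $\alpha^{3/2}$ and $\alpha^{5/2}+\alpha^{3/2}s^{-1/2}$ of \eqref{bL2} and \eqref{gradbL2}. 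Finally, the difference estimates \eqref{cL2}--\eqref{gradcL1} are obtained by repeating verbatim the computation for $a(r)$ on the explicit formula $a(r_1)-a(r_2)=\re(r_1-r_2)\big/[(\re r_1+g)(\re r_2+g)]$: it is linear in $r_1-r_2$, each denominator is again $\geq \tfrac12 g$, and the factor $\|r_1-r_2\|_{X}$ simply replaces the size-one bounds used before.

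The main obstacle is the family of gradient estimates for $a(r)$ near the origin, \eqref{gradaL2I} and \eqref{gradaL1} (and their analogues \eqref{gradcL2I}, \eqref{gradcL1}). Differentiating the quotient defining $a(r)$ on $I$ produces three terms carrying $\partial_\sigma r$, $\re r\,\partial_\sigma\re r$ and $\re r\,\partial_\sigma\re H$, each weighted by up to three factors of $(\re r+\re H)^{-1}$ or $(\re H)^{-1}\sim(\sqrt s+\alpha|\sigma|)^{-1}$. The delicate point is to distribute these singular weights so that the $L^2(I)$ and $L^1(I)$ integrals over the shrinking interval $|\sigma|<1/(2\alpha)$ convert into exactly the advertised powers $s^{\gamma-1}$, $\alpha^{1/2}s^{-3/4}$ and $\alpha^{-1/2}s^{\gamma-3/4}+s^{-1/2}$; this requires using the full strength of the \emph{linear} lower bound $\re H\gtrsim \sqrt s+\alpha|\sigma|$ rather than merely $\re H\gtrsim\sqrt s$, and carefully balancing the $L^2$ and $L^\infty(I)$ components of $\|r\|_X$ against the weights.
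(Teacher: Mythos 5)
Your proposal follows essentially the same route as the paper: the same lower bounds $\re H\gtrsim\sqrt s+\alpha|\sigma|$ and $\frac{\re H}{1+\psi}\gtrsim 1$ off $I$, the same pointwise decompositions of $a(r)$ (with $|1+\psi-\re H|\leq C\alpha$ driving \eqref{ineq:a-2}), the same observation that $b$ and its derivative vanish on $(-1/\alpha,1/\alpha)$, and the same weighted integrals $\int_I(\sqrt s+\alpha|\sigma|)^{-k}$ for the norm bounds, including the treatment of the singular gradient terms on $I$ via $(\sqrt s+\alpha|\sigma|)^{-2}\leq s^{-1}$. The plan is correct and matches the paper's proof.
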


We postpone the technical proof of this lemma to the last section.

\subsection{Proof of Theorem \ref{thm:main} completed}

We can now show that the operators $A_a$ and $A_b$ are contractions on the subset $B$ of $X$.

\medskip

\textbullet \: \textbf{Stability.}
Let $r\in B$.
We apply Lemma \ref{fixedpointlemma} to $F=a(r)$ and we use \eqref{aL2}, \eqref{aL1}, \eqref{gradaL2}, \eqref{gradaL2I} and \eqref{gradaL1} to obtain 
\begin{equation}\label{estim:A}\begin{split}\|A_a(r)\|_X&
\leq C\sup_{t\in (0,t_0]}
\left(\frac{\alpha^{\frac{1}{2}}t+\alpha^{-\frac 12}t^\frac 34}{t^\frac 34}+\frac{\alpha^{-\frac 12}t^{\gamma}+t^{\frac 14}+t^{\gamma+1}+\alpha^\frac 12 t}{t^\gamma}\right)\\
&+C\sup_{t\in (0,t_0]}\left(\frac{\alpha^{-\frac 12} t^{\frac 12}+\alpha^2t^{\frac 32}+\alpha t^{\frac 54}+\alpha^\frac 12 t^{\gamma+\frac 12}+\alpha t^\frac 34}{t^\frac 12}\right).\end{split}\end{equation}

In order to estimate $A_b$ in $X$ we apply Lemma \ref{fixedpointlemma} to $F=b$. Then the estimates  \eqref{bL2}, \eqref{bL1}, \eqref{gradbL2}, \eqref{gradbL2I} and \eqref{gradbL1} for $b$ yield
\begin{equation*}\label{est:A}\|A_b(r)\|_X\leq C\sup_{t\in (0,t_0]}\left(\frac{\alpha^\frac 32 t}{t^\frac 34}+
 \frac{\alpha^{\frac52} t + \alpha^{\frac32} t^\frac12}{t^\gamma}
+\frac{\alpha^3t^\frac 32+\alpha^2 t}{t^\frac 12}\right).\end{equation*}

Hence we obtain an estimate of the form
$$
\left\|A(r)\right\|_{X}\leq C\left(\alpha^{-\frac 12}+\sum_{k=1}^{13}\alpha^{r_k}t_0^{p_k}\right),
$$where $r_k\geq 0$ and $p_k>0$. We choose first $K_0$ such that $C\alpha^{-\frac 12}\leq 1/14.$ 	Then we choose $t_0$ sufficiently small such that $C\sum_k\alpha^{r_k}t_0^{p_k}\leq 13/14$.
We conclude that $\left\|A(r)\right\|_{X}\leq 1$.

\medskip

\textbullet \:\textbf{Contraction.} 
Again, in order to control $A(r_1)-A(r_2)$ we use Lemma \ref{fixedpointlemma} with the choice $F=a(r_1)-a(r_2)$. Therefore, combining with the estimates \eqref{cL2}, \eqref{cL1}, \eqref{gradcL2}, \eqref{gradcL2I} and \eqref{gradcL1} we obtain
\begin{equation*}\begin{split}
&\|A(r_1)-A(r_2)\|_X \\
&\leq C\|r_1-r_2\|_X\sup_{t\in(0,t_0]}\left(\frac{\alpha^{-\frac 12}t^{\frac34}+t^\frac 74}{t^\frac 34}+\frac{\alpha^{-\frac 12} t^\gamma+ t^{\frac14} +t^{\gamma+1}+ \alpha^\frac 12t^{\frac {11}8+\frac \gamma 2}}{t^\gamma}\right)\\
&+ C\|r_1-r_2\|_X\sup_{t\in(0,t_0]}\left(\frac{\alpha^{-\frac 12}t^{\frac 12}+\alpha t^\frac 54 +\alpha^\frac 32t^{\frac 94}+\alpha^\frac 12 t^{\gamma+\frac 12}+\alpha^\frac 32 t^\frac 34+\alpha t^{\frac{15}{8} + \frac{\gamma}{2}}}{t^\frac 12}\right)\\
&\leq C\left(\alpha^{-\frac 12}+\sum_{k=1}^{9}\alpha^{r'_k}t_0^{p'_k}\right)\|r_1-r_2\|_X\end{split}\end{equation*}where $r'_k\geq 0$ and $p'_k>0$.
Therefore we obtain
$$\|A(r_1)-A(r_2)\|_X\leq \frac{1}{2}\|r_1-r_2\|_X$$
 provided that again $\alpha>K_0$ with $K_0$ sufficiently large, and  $t_0$ is sufficiently small with respect to $\alpha$.

\section{Proof of Theorems \ref{thm:ak-pol} and \ref{thm:main-pol}}

\label{sec:extension}

\subsection{Sketch of proof of Theorem \ref{thm:ak-pol}}
In this paragraph, we briefly sketch the arguments for the proof of Theorem \ref{thm:ak-pol}, which follows the lines of the proof of Theorem \ref{thm:ak}, given in \cite[Theo. 3.1]{BFM}.

Let us recall that the strategy to obtain a unique solution $(u,v)$ to \eqref{syst:self-sim-pol}, as given in the statement of Theorem \ref{thm:ak-pol}, relies on finding a fixed point $w=v-1$ in the space $E$ for the operator 
$$P(w)(x)= e^{\frac{ix^2}{4}}-1+e^{\frac{ix^2}{4}}\omega \int_0^x \frac{y e^{-\frac{iy^2}{4}}}{\overline{u(w)}(y)}\,dy,\quad x\in \R,$$
with $u(w)$ dictated by the second equation of \eqref{syst:self-sim-pol} together with the condition at infinity \eqref{infcond-pol}:
$$
u(w)(x)
=1+|x|\left(\alpha+\int_{|x|}^{+\infty} \frac{w(z)}{z^2}\,dz\right),\quad \text{for  } x\in \R.
$$
The difference between Theorems \ref{thm:ak-pol} and \ref{thm:ak} is that the nonlinearity is given by $1/\overline{u}=u/|u|^2$ instead of $1/\re(u)$. It turns out that the estimates of the proof of \cite[Theo.\ 3.1]{BFM} may be adapted in a straightforward way, essentially because $|u|\geq \re(u)$. More precisely, given the inequalities recalled in  Lemma \ref{lemma:borrowed} we have on the one hand for any $w\in E$ $$|u(w)(x)|\geq \re(u(w)(x))\geq 1+\frac{\re(\alpha) |x|}{2},$$ and on the other hand
$$  |u(w)'(x)|\leq 2|\alpha|,$$ whence
$$\left|\frac{\overline{u(w)}'(x)}{\overline{u(w)}(x)^2}\right| \leq \frac{C|\alpha|}{\re(\alpha)^2}.$$ This imposes the additional condition $|\alpha|\leq \re(\alpha)^2$ in order to close the fixed point argument.

\subsection{Proof of Theorem \ref{thm:main-pol}}
We look for a solution $$\Psi_1(t,\sigma)=\rho e^{i\theta+i\omega t}\left(r(t,\sigma)+\frac{H(t,\sigma)}{(1+\psi(\alpha |\sigma|)}\right)$$  to the equation \eqref{eqpsi-pol}. Hence we look for a solution $r$ to the equation
\begin{equation*}
 i\partial_t r+\partial_\sigma^2 r=\omega \tilde{a}(r)+b,
\end{equation*}
where \begin{equation*}
\begin{cases}
\displaystyle \tilde{a}(r)=- \frac{1}{\overline{r}+\frac{\overline{H}}{1+\psi(\alpha |\sigma|)}}
+{r}+\frac{{H}}{1+\psi(\alpha |\sigma|)}+\frac{1}{(1+\psi(\alpha |\sigma|))\overline{H}}\\ \\
\displaystyle b=-2 \ds H \partial_\sigma \left(\frac{1}{1+\psi(\alpha |\sigma|)}\right)-H\partial_\sigma^2\left(\frac{1}{1+\psi(\alpha |\sigma|)}\right).
\end{cases}
\end{equation*} 
Note that we get the same source term $b$ as in \eqref{eq:source-1}. In particular, in order to transpose the proof of Theorem \ref{thm:main}, we only have to establish the analogs of estimates \eqref{ineq:a-1} to \eqref{gradaL1} for $\tilde{a}$, and the estimates \eqref{cL2} to \eqref{gradcL1} for $\tilde{a}(r_2)-\tilde{a}(r_1)$. 

More precisely, let $\tilde{t}_0\in (0,1)$ to be determined later and let $X$ and $B$ denote the corresponding spaces defined as in \eqref{def:X-1}, \eqref{def:X-2} and \eqref{def:B}. 
\begin{lemma}\label{est-pol} Let $r\in B$ and $0\leq s\leq \tilde{t}_0$.

\textbullet\: For $\tilde{a}(r)$,  the following pointwise estimates hold:
\begin{equation}\label{ineq:a-1-pol}
|\tilde{a}(r)(s,\sigma)|\leq C\left(1+\frac{|r(s,\sigma)|}{(\sqrt{s}+\alpha |\sigma|)^2}\right)\quad \mbox{ if } \sigma\in I,
\end{equation}
\begin{equation}\label{ineq:a-2-pol}
|\tilde{a}(r)(s,\sigma)|\leq C\left( \frac{\alpha^2}{(1+\psi(\alpha |\sigma|))^2}+\frac{\alpha}{1+\psi(\alpha |\sigma|)}+|r(s,\sigma)|\right)\quad \mbox{ if } \sigma\in \R\setminus I.
\end{equation}

\textbullet\: The following norm estimates hold:
\begin{equation}\label{aL2-pol}
\|\tilde{a}(r)(s)\|_{L^2(\mathbb R)}\leq C(\alpha^{\frac 32}+\alpha^{-\frac 12}s^{-\frac 14}).
\end{equation}
Moreover, the estimates \eqref{aL1} to \eqref{gradaL1} hold true for $\tilde{a}(r)$.

\medskip

\textbullet\: Let $r_1\in B$, $r_2\in B$. The estimates \eqref{cL2} to \eqref{gradcL1} hold true for $\tilde{a}(r_1)-\tilde{a}(r_2)$.

\medskip

Therefore, we have the same estimates for $\tilde{a}(r)$ and $\tilde{a}(r_1)-\tilde{a}(r_2)$ except for \eqref{ineq:a-2-pol} and for the exponent of $\alpha$ in the first term in the right-hand side of  \eqref{aL2-pol}.
\end{lemma}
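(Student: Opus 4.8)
The plan is to transpose the proof of Lemma~\ref{est} essentially line by line, keeping track only of the places where the nonlinearity $1/\overline{u}=u/|u|^2$ differs from the real nonlinearity $1/\re(u)$ of the anti-parallel case. Since the source term $b$ is literally the same as in \eqref{eq:source-1}, estimates \eqref{bL2}--\eqref{gradbL1} require nothing new, which is why the statement of Lemma~\ref{est-pol} only concerns $\tilde{a}(r)$ and the differences $\tilde{a}(r_1)-\tilde{a}(r_2)$. Writing $P=1+\psi(\alpha|\sigma|)\geq 1$ and $D=H/P$, the first step is the lower bound on the denominator: for $r\in B$ one has $\|r\|_{L^\infty([0,t]\times I)}<\sqrt{t}/4$ and $\|r\|_{L^\infty}<1/32$, while Theorem~\ref{thm:ak-pol} gives $|H|\geq\re(H)\geq c(\sqrt{s}+\alpha|\sigma|)$ and $|D|\geq 1/12$ outside $I$; hence $|r|\leq\tfrac12\,|H|/P$ and
\[
\Big|\overline{r}+\tfrac{\overline{H}}{P}\Big|\geq \tfrac12\,\frac{|H|}{P}\geq \frac{c}{2P}(\sqrt{s}+\alpha|\sigma|).
\]
This plays exactly the role that $\re(r)+\re(H)/P$ played in Lemma~\ref{est}, and also yields the pointwise lower bounds on $|\Psi_1|$ asserted in Theorem~\ref{thm:main-pol}.

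For the pointwise estimates I argue as in Lemma~\ref{est}. On $I$ we have $\psi=0$, so $P=1$ and
\[
\tilde{a}(r)=-\frac{1}{\overline{r}+\overline{H}}+r+H+\frac{1}{\overline{H}}
=\frac{\overline{r}}{\overline{H}\,(\overline{r}+\overline{H})}+r+H.
\]
The first term is bounded by $C|r|/(\sqrt{s}+\alpha|\sigma|)^2$ via the lower bounds above, while $|r+H|\leq C$ because $|H|\leq\sqrt{s}+2\alpha|\sigma|\leq C$ on $I$; this gives \eqref{ineq:a-1-pol}, the only change from \eqref{ineq:a-1} being the harmless factor $C$ in front of the constant. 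Outside $I$, expanding $(\overline{r}+\overline{H}/P)^{-1}$ to first order in $P\overline{r}/\overline{H}$ and recombining the $r$-independent terms, one gets $\tilde{a}(r)=M+(\text{remainder}\leq C|r|)$ with main part
\[
M=\frac{H}{P}-\frac{P}{\overline{H}}+\frac{1}{P\overline{H}}
=\frac{H}{P}\,\frac{|H|^2-P^2+1}{|H|^2},\qquad
|M|\leq \frac{\big||H|^2-P^2\big|+1}{P|H|}.
\]

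The decisive difference with the real case lives in $|H|^2-P^2$, and this is where I expect the main obstacle to be, since the crude bound $|H|\leq\sqrt{s}+2\alpha|\sigma|$ is not sharp enough. I would decompose
\[
|H|^2-P^2=\big[(\re H)^2-P^2\big]+(\Im H)^2 .
\]
The bracket is the quantity already controlled in the anti-parallel proof: using the asymptotics of $H$ from Lemmas~\ref{lemma:H}--\ref{lemma:H-r} one has $|\re(H)-P|\leq C\alpha$ and $\re(H)+P\leq CP$, so $|(\re H)^2-P^2|\leq C\alpha P$, which after division by $P|H|$ produces only the term $C\alpha/P$ already present in \eqref{ineq:a-2}. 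The genuinely new contribution is $(\Im H)^2$: since $\Im(H)=\sqrt{s}\,\Im(u)$ with $\Im(u)(x)=|x|\,\Im\big(\int_{|x|}^{\infty}\frac{v-1}{z^2}\,dz\big)$ and the integral bounded by $C\alpha/|x|$, one gets $|\Im(H)|\leq C\alpha$, hence $(\Im H)^2\leq C\alpha^2$. Dividing by $P|H|\geq cP^2$ yields precisely the extra term $C\alpha^2/P^2$ of \eqref{ineq:a-2-pol}. This imperfect cancellation, absent in the real case because there $|H|^2$ is replaced by $(\re H)^2$, is the whole source of the discrepancy flagged at the end of the lemma.

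The norm estimates then follow by integration, and only \eqref{aL2-pol} changes: after the change of variables $\tau=\alpha\sigma$ one computes $\int_{\R\setminus I}P^{-4}\,d\sigma=C/\alpha$, so $\|\alpha^2/P^2\|_{L^2(\R\setminus I)}=C\alpha^{3/2}$, which now dominates the old term $\|\alpha/P\|_{L^2}=C\alpha^{1/2}$ and produces the exponent $\alpha^{3/2}$ in place of $\alpha^{1/2}$; estimates \eqref{aL1}--\eqref{gradaL1} are unchanged. For the \emph{derivative} estimates I would differentiate $\tilde{a}$ in the exact form $M=H/P-P/\overline{H}+1/(P\overline{H})$ rather than the loose pointwise bound: each resulting term (involving $\partial_\sigma H=u'$ with $|u'|\leq 2\alpha$, $\partial_\sigma P=O(\alpha)$, and $|H|\geq c(\sqrt{s}+\alpha|\sigma|)$) is controlled exactly as the corresponding real term in Lemma~\ref{est}, so no $\alpha^{3/2}$-type loss appears in $\partial_\sigma\tilde{a}$. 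Finally, the $r$-independent part of $\tilde{a}$ cancels in the difference, leaving
\[
\tilde{a}(r_1)-\tilde{a}(r_2)=\frac{\overline{r_1}-\overline{r_2}}{(\overline{r_1}+\overline{D})(\overline{r_2}+\overline{D})}+(r_1-r_2),
\]
whose denominators obey the same lower bounds as in the anti-parallel case and whose extra linear term $(r_1-r_2)$ is of lower order; hence \eqref{cL2}--\eqref{gradcL1} transfer verbatim, completing the proof.
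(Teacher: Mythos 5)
Your proposal is correct and follows essentially the same route as the paper: isolate the $r$-dependent part of $\tilde a$ using the lower bounds of Lemma \ref{lemma:H-r}, and trace the new contribution $C\alpha^2/(1+\psi(\alpha|\sigma|))^2$ in \eqref{ineq:a-2-pol} to the imperfect cancellation between $|H|^2$ and $(1+\psi(\alpha|\sigma|))^2$ (the paper expands $H=\alpha|\sigma|+R$ with $|R|\leq 2\alpha$ rather than splitting into real and imaginary parts, but the resulting bound and the exponent $\alpha^{3/2}$ in \eqref{aL2-pol} are the same). One small point to make explicit: for \eqref{aL1} you cannot simply integrate \eqref{ineq:a-2-pol} over $1/2\leq\alpha|\sigma|\leq 1$ (that would give $C\alpha$, not $C\alpha^{-1/2}$); there one uses, as the paper does, that $\psi=0$ and $1/3\leq |H|\leq C$ on that annulus, so that $|\tilde a(r)|\leq C$ pointwise --- a bound your formula for $M$ delivers immediately.
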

The technical proof of Lemma \ref{est-pol} is provided in the last section.

\medskip

We can now complete the proof of Theorem \ref{thm:main-pol}. In view of Lemma \ref{est-pol}, the only difference with the proof of Theorem \ref{thm:main} appears when estimating $\|A_{\tilde{a}}(r)\|_X$, where
$$A_{\tilde{a}}(r)(t,\sigma)=-i\omega \int_0^t e^{i(t-s)\partial_\sigma^2}\tilde{a}(r)(s,\sigma)ds.$$
More precisely, there are only two terms involving the norm $\|\tilde{a}(r)\|_{L^2}$ in Lemma \ref{fixedpointlemma} with $F=\tilde{a}(r)$. Hence only two terms are changed in the estimate \eqref{estim:A}, which turns into:
\begin{equation*}\begin{split}\|A_{\tilde{a}}(r)\|_X&
\leq C|\omega|\sup_{t\in (0,\tilde{t}_0]}
\left(\frac{\alpha^{\frac{3}{2}}t+\alpha^{-\frac 12}t^\frac 34}{t^\frac 34}+\frac{\alpha^{-\frac 12}t^{\gamma}+t^{\frac 14}+t^{\gamma+1}+\alpha^\frac 12 t}{t^\gamma}\right)\\
&+C|\omega|\sup_{t\in (0,\tilde{t}_0]}\left(\frac{\alpha^{-\frac 12} t^{\frac 12}+\alpha^3 t^{\frac 32}+\alpha t^{\frac 54}+\alpha^\frac 12 t^{\gamma+\frac 12}+\alpha t^\frac 34}{t^\frac 12}\right).\end{split}\end{equation*}
With the same arguments as in the proof of Theorem \ref{thm:main}, we conclude that
$$
\left\|A_{\tilde{a}}(r)+A_b(r)\right\|_{X}< 1
$$
and stability holds provided $\alpha>\tilde{K}_0$ with $\tilde{K}_0$ sufficiently large, and  $\tilde{t}_0$ is sufficiently small with respect to $\alpha$.

\section{Some useful estimates and the proofs of Lemmas \ref{est} and \ref{est-pol}}

We first recall and collect a few estimates for the profile $H$, which are borrowed from \cite{BFM}. 
\subsection{Estimates on $H$}\label{sub:estim-H}
The following Lemma is directly derived from the estimates (19) to (21) in \cite{BFM}.
\begin{lemma}\label{lemma:borrowed}
Let $\alpha\in  \C$ such that $\re(\alpha)>0$. Let $$E=\left\{w\in C^1(\R),\:w(0)=w'(0)=0,\:\: w\text{  even  },\:\: \|w\|_{L^\infty} + \||x|^{-1}w'\|_{L^\infty}\leq \frac{\re(\alpha)}{4}\right\}.$$
Let $w\in E$. We have
\begin{equation}\label{ineq:app2}
\sup_{x\in \R_+}\frac{|w(x)|}{x}\leq \|w\|_{L^\infty}+ \frac{1}{2}\| |x|^{-1} w'\|_{\infty}
\end{equation}
and
\begin{equation}\label{ineq:app4}
\int_{0}^{+\infty}\frac{|w|}{z^2}\,dz\leq \|w\|_{L^\infty}+ \frac{1}{2}\| |x|^{-1} w'\|_{\infty}.
\end{equation}

Defining
$$u(x)=1+\alpha |x|+|x|\int_{|x|}^{+\infty}\frac{w}{z^2}\,dz,\quad x\in \R,$$
the following estimates hold:
\begin{equation*}
|u(x)-1-\alpha |x||\leq \min\left(\|w\|_{L^\infty}, |x|\int_0^{+\infty} \frac{|w|}{z^2}\,dz\right)\leq \min\left( 1,|x|\right)\frac{\re(\alpha)}{4},
\end{equation*}
\begin{equation*}
|u(x)|\leq 1+|x|\left(|\alpha|+\int_{0}^{+\infty}\frac{|w|}{z^2}\,dz\right),
\end{equation*}
and
therefore
\begin{equation*}\label{ineq:u}
|u(x)|\leq 1+  \frac{5|\alpha||x|}{4}. 
\end{equation*}
Moreover,
\begin{equation}\label{ineq:lower-appendix}
|u(x)|\geq \re (u(x))\geq 1+\frac{3\re(\alpha) |x|}{4}\geq \frac{1}{2}(1+\re(\alpha) |x|).
\end{equation}

\medskip

Finally, we have $u\in C^2(\R^*)$ and for all $x\in \R^*$
 \begin{equation}\label{ineq:app3}
| u'(x)| \leq |\alpha| +\left|\int_{|x|}^{+\infty} \frac{w}{z^2} d z\right| + \frac{|w(x)|}{|x|} \leq 2|\alpha|, 
\end{equation}
and 
 \begin{equation}\label{ineq:app5}
|u''(x)| =  \frac{|w'(x)|}{|x|} \leq \frac{ |\alpha|}{4}.
\end{equation}

\end{lemma}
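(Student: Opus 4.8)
The plan is to reduce every inequality to two elementary pointwise controls on $w$ coming from the constraints defining $E$, and then to feed these into the explicit formula for $u$. Since $w(0)=w'(0)=0$ and $w\in C^1$, writing $w(x)=\int_0^x w'(s)\,ds$ and bounding $|w'(s)|\leq s\,\||x|^{-1}w'\|_{L^\infty}$ yields the quadratic control $|w(x)|\leq \frac{x^2}{2}\||x|^{-1}w'\|_{L^\infty}$ near the origin, while far from the origin one has the crude bound $|w(x)|\leq \|w\|_{L^\infty}$. These two estimates, combined with the constraint $\|w\|_{L^\infty}+\||x|^{-1}w'\|_{L^\infty}\leq \re(\alpha)/4$, are the only inputs needed.

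First I would prove \eqref{ineq:app2} and \eqref{ineq:app4} by splitting $\R_+$ at $x=1$. For \eqref{ineq:app2}, on $\{x\leq 1\}$ the quadratic control gives $|w(x)|/x\leq \frac{x}{2}\||x|^{-1}w'\|_{L^\infty}\leq \frac12\||x|^{-1}w'\|_{L^\infty}$, whereas on $\{x\geq 1\}$ one has $|w(x)|/x\leq |w(x)|\leq \|w\|_{L^\infty}$; taking the sup gives the claim. For \eqref{ineq:app4}, the same splitting makes $\int_0^1 |w|z^{-2}\,dz$ converge, the quadratic control cancelling the $z^{-2}$ singularity and leaving $\frac12\||x|^{-1}w'\|_{L^\infty}$, while $\int_1^\infty |w|z^{-2}\,dz\leq \|w\|_{L^\infty}\int_1^\infty z^{-2}\,dz=\|w\|_{L^\infty}$.

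Next I would treat the estimates on $u(x)=1+\alpha|x|+|x|\int_{|x|}^{+\infty}w z^{-2}\,dz$. For the bound on $|u(x)-1-\alpha|x||=|x|\,\big|\int_{|x|}^{+\infty}wz^{-2}\,dz\big|$ I would estimate the tail in two ways: using $|w|\leq \|w\|_{L^\infty}$ together with $\int_{|x|}^{+\infty}z^{-2}\,dz=|x|^{-1}$ gives $\leq \|w\|_{L^\infty}$, while bounding the tail by the full integral gives $\leq |x|\int_0^{+\infty}|w|z^{-2}\,dz$; taking the minimum and inserting $\|w\|_{L^\infty}\leq \re(\alpha)/4$ and \eqref{ineq:app4} produces the factor $\min(1,|x|)\re(\alpha)/4$. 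The upper bound on $|u(x)|$ is then the triangle inequality followed by $\int_0^{+\infty}|w|z^{-2}\,dz\leq \re(\alpha)/4\leq |\alpha|/4$, which yields the coefficient $5|\alpha|/4$. For the lower bound \eqref{ineq:lower-appendix} I would keep $\re(\alpha)|x|$ from the linear term and absorb the tail into it via $\re\!\big(\int_{|x|}^{+\infty}wz^{-2}\,dz\big)\geq -\int_0^{+\infty}|w|z^{-2}\,dz\geq -\re(\alpha)/4$, giving $\re(u(x))\geq 1+\frac34\re(\alpha)|x|$, and the final comparison with $\frac12(1+\re(\alpha)|x|)$ is elementary. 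Finally, differentiating the formula for $u$ on $\R^*$ gives $u'(x)=\alpha+\int_{|x|}^{+\infty}wz^{-2}\,dz-w(x)/|x|$ and $u''(x)=-w'(x)/|x|$; then \eqref{ineq:app3} follows from \eqref{ineq:app4} and \eqref{ineq:app2}, and \eqref{ineq:app5} directly from the definition of $\||x|^{-1}w'\|_{L^\infty}$, with $C^2$ regularity away from the origin coming from $w\in C^1$.

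The only genuinely delicate point is the region-splitting used to obtain \eqref{ineq:app2} and \eqref{ineq:app4}: the condition $w(0)=w'(0)=0$ must be exploited through the quadratic control $|w(x)|\leq \frac{x^2}{2}\||x|^{-1}w'\|_{L^\infty}$ in order to guarantee both the boundedness of $|w(x)|/x$ near $x=0$ and the convergence of the singular integral $\int_{0}^{+\infty}|w|z^{-2}\,dz$ at the origin; once these two inequalities are established, all the remaining bounds are routine applications of the triangle inequality together with insertion of the constraints defining $E$.
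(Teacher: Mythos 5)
Your proof is correct, and it is essentially the intended argument: the paper does not prove this lemma itself but cites estimates (19)--(21) of \cite{BFM}, which rest on exactly the two pointwise controls you isolate (the quadratic bound $|w(x)|\leq \tfrac{x^2}{2}\||x|^{-1}w'\|_{L^\infty}$ from $w(0)=w'(0)=0$, and the crude bound $|w|\leq\|w\|_{L^\infty}$), fed into the explicit formula for $u$ and its derivatives. All your individual steps check out, including the differentiation giving $u''(x)=-w'(x)/|x|$ up to sign and the absorption of the tail integral in the lower bound \eqref{ineq:lower-appendix}.
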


We infer immediately the following estimates for $H$
\begin{lemma}\label{lemma:H}Let $\alpha>0$ and $u$ as in Lemma \ref{lemma:borrowed}. Let $s\in (0,1)$.
Setting $$H(s,\sigma)=\sqrt{s}u\left(\frac{\sigma}{\sqrt{s}}\right),\quad \sigma\in \R,$$ we have
\begin{equation}\label{dev:H}
|H(s,\sigma)-\sqrt{s}-\alpha |\sigma||\leq \frac{\alpha}{4}
\min(\sqrt{s},|\sigma |)
\end{equation}
and in particular 
\begin{equation}\label{ineq:upper-bound-2}
|H(s,\sigma)|\leq C(\sqrt{s}+\alpha |\sigma|).
\end{equation}
Moreover,
\begin{equation}\label{Hinffar}
|H(s,\sigma)|\geq \re (H(s,\sigma))\geq \frac{1}{2}(\sqrt{s}+\alpha |\sigma|). 
\end{equation}

Finally,
$$
| \partial_\sigma H(s,\sigma) | \leq 2 \alpha \quad \mbox{and}\quad
| \partial_\sigma^2 H(s,\sigma) | \leq \frac{\alpha}{4}  s^{-1/2},\quad \text{for s } \neq 0 \quad \text{and}\quad \sigma\neq 0.
$$

\end{lemma}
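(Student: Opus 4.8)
The plan is to obtain every estimate by a direct scaling argument from the corresponding bounds on the profile $u$ recorded in Lemma \ref{lemma:borrowed}. Throughout, set $x=\sigma/\sqrt{s}$, so that $H(s,\sigma)=\sqrt{s}\,u(x)$ and $|\sigma|=\sqrt{s}\,|x|$. Since in the present statement $\alpha>0$ is real, we have $\re(\alpha)=\alpha$ everywhere in the inequalities of Lemma \ref{lemma:borrowed}. The whole content of the lemma is then the observation that multiplying those inequalities by the appropriate power of $\sqrt{s}$ produces the claimed bounds on $H$; there is no genuine obstacle, and the only point requiring care is bookkeeping of the scaling factors.

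First I would prove \eqref{dev:H}. The first displayed estimate of Lemma \ref{lemma:borrowed} gives
$$\bigl|u(x)-1-\alpha|x|\bigr|\leq \min(1,|x|)\,\frac{\alpha}{4}.$$
Multiplying by $\sqrt{s}$ and using $\sqrt{s}\,u(x)=H(s,\sigma)$, $\sqrt{s}\,\alpha|x|=\alpha|\sigma|$, together with the identity $\sqrt{s}\,\min(1,|x|)=\min\bigl(\sqrt{s},\sqrt{s}|x|\bigr)=\min(\sqrt{s},|\sigma|)$, yields exactly \eqref{dev:H}. The upper bound \eqref{ineq:upper-bound-2} then follows either from \eqref{dev:H} by the triangle inequality, or directly from $|u(x)|\leq 1+\tfrac{5|\alpha||x|}{4}$ of Lemma \ref{lemma:borrowed}, giving $|H(s,\sigma)|=\sqrt{s}\,|u(x)|\leq \sqrt{s}+\tfrac{5}{4}\alpha|\sigma|\leq C(\sqrt{s}+\alpha|\sigma|)$. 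For the lower bound \eqref{Hinffar}, I multiply \eqref{ineq:lower-appendix} by $\sqrt{s}$; since $\re\bigl(H(s,\sigma)\bigr)=\sqrt{s}\,\re\bigl(u(x)\bigr)$, this gives
$$|H(s,\sigma)|=\sqrt{s}\,|u(x)|\geq \sqrt{s}\,\re\bigl(u(x)\bigr)=\re\bigl(H(s,\sigma)\bigr)\geq \frac{\sqrt{s}}{2}\bigl(1+\alpha|x|\bigr)=\frac{1}{2}\bigl(\sqrt{s}+\alpha|\sigma|\bigr).$$

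Finally, for the derivative estimates I would apply the chain rule to $H(s,\sigma)=\sqrt{s}\,u(\sigma/\sqrt{s})$, which gives $\partial_\sigma H(s,\sigma)=u'(\sigma/\sqrt{s})$ and $\partial_\sigma^2 H(s,\sigma)=s^{-1/2}\,u''(\sigma/\sqrt{s})$ for $s\neq 0$ and $\sigma\neq 0$, where $u$ is $C^2$ away from the origin. The bound $|u'(x)|\leq 2|\alpha|=2\alpha$ from \eqref{ineq:app3} then immediately gives $|\partial_\sigma H(s,\sigma)|\leq 2\alpha$, while $|u''(x)|\leq \tfrac{|\alpha|}{4}=\tfrac{\alpha}{4}$ from \eqref{ineq:app5} gives $|\partial_\sigma^2 H(s,\sigma)|\leq \tfrac{\alpha}{4}\,s^{-1/2}$, which are the last assertions. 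This completes the proof.
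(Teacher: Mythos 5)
Your proof is correct and follows exactly the route the paper intends: the paper gives no written proof, stating only that the lemma is "inferred immediately" from Lemma \ref{lemma:borrowed}, and your scaling argument with $x=\sigma/\sqrt{s}$ (multiplying each profile bound by $\sqrt{s}$ and applying the chain rule for the derivative estimates) is precisely that immediate inference, carried out with correct bookkeeping.
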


\begin{lemma}\label{lemma:H-r}
Under the same assumptions as in Lemmata \ref{lemma:borrowed} and \ref{lemma:H} we have, setting $I=(-1/2\alpha,1/2\alpha)$
\begin{equation}\label{ineq:near-1}
\frac{| H(s,\sigma)|}{1+\psi(\alpha |\sigma|)}\geq \frac{\re (H(s,\sigma))}{1+\psi(\alpha |\sigma|)}\geq \frac{1}{2}(\sqrt{s}+\alpha |\sigma|)\quad \text{if }\: \sigma \in I,
\end{equation}
and
\begin{equation}\label{ineq:far-1}
\frac{|H(s,\sigma)|}{1+\psi(\alpha |\sigma|)}\geq \frac{\re (H(s,\sigma))}{1+\psi(\alpha |\sigma|)}\geq \frac{1}{3}\quad \text{if }\: \sigma \in \R\setminus I.
\end{equation}

Let $r\in X$, where $X$ is defined in \eqref{def:X-1}-\eqref{def:X-2}, and assume that $\|r\|_X\leq 1$. We have
\begin{equation}\label{ineq:near-2}
\left|r(s,\sigma))+\frac{H(s,\sigma)}{1+\psi(\alpha |\sigma|)}\right|\geq \re (r(s,\sigma))+\frac{\re(H(s,\sigma))}{1+\psi(\alpha |\sigma|)}\geq \frac{1}{4}(\sqrt{s}+\alpha |\sigma|)\quad \text{if }\: \sigma\in I,
\end{equation}
and 
\begin{equation}\label{ineq:far-2}
\left|r(s,\sigma))+\frac{H(s,\sigma)}{1+\psi(\alpha |\sigma|)}\right|\geq \re (r(s,\sigma))+\frac{\re(H(s,\sigma))}{1+\psi(\alpha |\sigma|)}\geq \frac{1}{32}\quad \text{if }\: \sigma \in \R\setminus I.
\end{equation}

\end{lemma}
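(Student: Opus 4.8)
The plan is to handle the four displayed inequalities in turn, after the single preliminary remark that the leftmost inequality in each of them is nothing but the elementary bound $|w|\ge\re(w)$, applied to $w=H/(1+\psi(\alpha|\sigma|))$ and to $w=r+H/(1+\psi(\alpha|\sigma|))$. This reduces everything to the lower bounds on the real parts, and the whole argument is then organised around whether the cut-off $\psi(\alpha|\sigma|)=\alpha|\sigma|\varphi(\alpha|\sigma|)$ is active. Since $\varphi$ vanishes on $[0,1]$, the key observation is that $\psi(\alpha|\sigma|)=0$ whenever $\alpha|\sigma|\le 1$, and in particular on $I$; whereas for $\alpha|\sigma|\ge 1$ one only has the crude upper bound $\psi(\alpha|\sigma|)\le\alpha|\sigma|$ coming from $\varphi\le 1$. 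The tools are the estimates \eqref{dev:H} and \eqref{Hinffar} of Lemma \ref{lemma:H} together with the pointwise control of $r$ encoded in $\|r\|_X\le 1$.

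For \eqref{ineq:near-1} I would note that on $I$ one has $\alpha|\sigma|<1/2\le 1$, so $\varphi(\alpha|\sigma|)=0$, the denominator $1+\psi(\alpha|\sigma|)$ equals $1$, and the claimed bound is then exactly \eqref{Hinffar}.

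The step I expect to be the genuine obstacle is \eqref{ineq:far-1}, because a naive use of \eqref{Hinffar} fails there: at the inner edge $\alpha|\sigma|=1/2$ (where $\psi$ is still $0$) it only yields $\re(H)\ge\tfrac12(\sqrt s+\tfrac12)\to\tfrac14<\tfrac13$ as $s\to0$. The remedy is to use the sharper deviation estimate \eqref{dev:H} instead: since $\alpha$ is real and $\min(\sqrt s,|\sigma|)\le|\sigma|$, it produces the refined lower bound $\re(H(s,\sigma))\ge\sqrt s+\tfrac34\alpha|\sigma|$. I would then split $\R\setminus I$ into the range $1/2\le\alpha|\sigma|\le1$, where the denominator is still $1$ and $\re(H)\ge\tfrac34\alpha|\sigma|\ge\tfrac38$, and the range $\alpha|\sigma|>1$, where $1+\psi(\alpha|\sigma|)\le1+\alpha|\sigma|$ and the monotonicity of $\tau\mapsto\tau/(1+\tau)$ gives $\re(H)/(1+\psi(\alpha|\sigma|))\ge\tfrac34\,\alpha|\sigma|/(1+\alpha|\sigma|)>\tfrac38$. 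In both cases the quantity exceeds $\tfrac13$ (with room to spare), which proves \eqref{ineq:far-1}.

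Finally, for \eqref{ineq:near-2} and \eqref{ineq:far-2} I would add the perturbation and estimate $\re(r+\tfrac{H}{1+\psi(\alpha|\sigma|)})=\re(r)+\tfrac{\re(H)}{1+\psi(\alpha|\sigma|)}\ge-|r|+\tfrac{\re(H)}{1+\psi(\alpha|\sigma|)}$, the pointwise control of $r$ from $\|r\|_X\le1$ being what closes the argument. On $I$ the third term in the definition \eqref{def:X-2} of $\|\cdot\|_X$ yields $|r(s,\sigma)|\le\sqrt s/8$ (a fortiori the bound \eqref{ineq:upper-r-1}), so combining with \eqref{ineq:near-1} gives $\re(r)+\tfrac{\re(H)}{1+\psi(\alpha|\sigma|)}\ge\tfrac38\sqrt s+\tfrac12\alpha|\sigma|\ge\tfrac14(\sqrt s+\alpha|\sigma|)$. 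Off $I$ the Gagliardo--Nirenberg bound \eqref{ineq:upper-r} gives $|r|<1/32$, so with \eqref{ineq:far-1} one obtains $\re(r)+\tfrac{\re(H)}{1+\psi(\alpha|\sigma|)}\ge\tfrac13-\tfrac1{32}>\tfrac1{32}$. Equivalently, one may first record that these two bounds force $|r|\le\tfrac12\,\re(H)/(1+\psi(\alpha|\sigma|))$ on all of $\R$, which immediately yields $\re(r)+\re(H)/(1+\psi(\alpha|\sigma|))\ge\tfrac12\,\re(H)/(1+\psi(\alpha|\sigma|))$ and then feeds \eqref{ineq:near-1}--\eqref{ineq:far-1} back in to conclude.
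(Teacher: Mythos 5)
Your proof is correct, and for three of the four inequalities it coincides with the paper's argument: \eqref{ineq:near-1} is just \eqref{Hinffar} once one notes $\psi(\alpha|\sigma|)=0$ on $I$, and \eqref{ineq:near-2}, \eqref{ineq:far-2} follow by absorbing $|r|$ via the $L^\infty(I)$ component of $\|\cdot\|_X$ and via \eqref{ineq:upper-r} respectively, exactly as in the paper (the paper uses the slightly weaker bound $\|r\|_{L^\infty(I)}<\sqrt{t}/4$ from \eqref{ineq:upper-r-1} where you use $\sqrt{s}/8$; both close the estimate).

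The one place where you genuinely diverge is \eqref{ineq:far-1}, and your route is the more careful one. The paper deduces \eqref{ineq:far-1} from \eqref{Hinffar} and $\psi(\alpha|\sigma|)\le\alpha|\sigma|$ via the chain $\re(H)/(1+\psi(\alpha|\sigma|))\ge \tfrac12\,\alpha|\sigma|/(1+\alpha|\sigma|)$, invoking $x/(1+x)\ge 2/3$ for $x\ge 2$; as written this only covers $\alpha|\sigma|\ge 2$, and indeed at $\alpha|\sigma|=1/2$ the constant-$\tfrac12$ bound \eqref{Hinffar} alone gives only $\tfrac14$ in the limit $s\to0$, as you point out. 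Your replacement — using \eqref{dev:H} (equivalently \eqref{ineq:lower-appendix}, legitimate since $\alpha$ is real here) to get $\re(H)\ge\sqrt{s}+\tfrac34\alpha|\sigma|$ and then splitting $\R\setminus I$ at $\alpha|\sigma|=1$ — yields the uniform bound $\tfrac38>\tfrac13$ on all of $\R\setminus I$ and thus supplies the small missing step in the published argument. The rest of your write-up, including the closing remark that the two regimes combine into $|r|\le\tfrac12\re(H)/(1+\psi(\alpha|\sigma|))$, is consistent with how the lemma is used later in the paper.
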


\begin{proof}[Proof of Lemma \ref{lemma:H-r}]
We immediately infer \eqref{ineq:near-1} from \eqref{Hinffar} since $\psi(\alpha |\sigma|)=0$ for $\sigma\in I$. 
Moreover, we have using \eqref{Hinffar} and the fact that $\psi(\alpha |\sigma|)\leq \alpha |\sigma|$
$$
\frac{\re (H(s,\sigma))}{1+\psi(\alpha |\sigma|)}\geq \frac 12 \left(\frac{\sqrt{s}+\alpha |\sigma|}{1+\psi(\alpha |\sigma|)}\right) \geq \frac 12 \left(\frac{\alpha |\sigma|}{1+\alpha |\sigma|}\right). 
$$
Since $x/(1+x)\geq 2/3$ for any $x\geq 2$ we obtain \eqref{ineq:far-1}.
Next, by \eqref{ineq:upper-r-1} we have for $\sigma \in I$
\begin{equation*}\begin{split}
 \re (r(s,\sigma))+\frac{\re(H(s,\sigma))}{1+\psi(\alpha |\sigma|)}
&\geq \frac{1}{2}(\sqrt{s}+\alpha |\sigma|)-\|r(s)\|_{L^\infty(I)}\geq \frac{1}{4}(\sqrt{s}+\alpha |\sigma|)\end{split}
\end{equation*}
and \eqref{ineq:near-2} follows.

Finally, using \eqref{ineq:upper-r}, we obtain for $\sigma\in \R\setminus I$
\begin{equation*}
\begin{split}
 \re (r(s,\sigma))+\frac{\re(H(s,\sigma))}{1+\psi(\alpha |\sigma|)}\geq
\frac{1}{3}-\frac{1}{32}\geq 
 \frac{1}{32},
\end{split}
\end{equation*} which yields \eqref{ineq:far-2}.

\end{proof}

\subsection{Estimates on $a(r)$}
\medskip
Now we establish the pointwise estimates for 
$$ 
a(r)=\frac{1}{\re (r)+\frac{\re(H)}{1+\psi(\alpha |\sigma|)}}-1-\frac{1}{1+\psi(\alpha |\sigma|)}\frac{1}{\re (H)}.$$
For simplicity we shall write $a$ instead of $a(r)$.
For $\sigma \in I$ we have $\psi(\alpha |\sigma|)=0$, hence 
$$a=\frac{1}{\re (r+H)}-\frac{1}{\re (H)}-1=\frac{-\re (r)}{\re (r+H)\re (H)}-1,$$
so \eqref{ineq:a-1} follows from \eqref{ineq:near-1} and \eqref{ineq:near-2}.

For $\sigma \in \R\setminus I$ we have using \eqref{ineq:far-1} and \eqref{ineq:far-2}
\begin{equation}\label{ineq:a-3}
\begin{split}
|a(s,\sigma)|&\leq \frac{1}{1+\psi(\alpha |\sigma|)}\frac{1}{\re (H)}
+\left|\frac{1}{\re \left(r+\frac{H}{1+\psi(\alpha |\sigma|)}\right)}-1\right|\\
&\leq \frac{C}{(1+\psi(\alpha |\sigma|))^2}+\frac{| 1+\psi(\alpha |\sigma|)-\re (H)|}
{(1+\psi(\alpha |\sigma|))\re \left(r+\frac{ H}{1+\psi(\alpha |\sigma|)}\right)}
+\frac{|\re (r(s,\sigma))|}{\re \left(r+\frac{ H}{1+\psi(\alpha |\sigma|)}\right)}.
\end{split}
\end{equation}
Next, we have by \eqref{dev:H} 
$$|1+\psi(\alpha |\sigma|)-\re(H)|\leq |1+\alpha |\sigma|-H| + |\psi(\alpha |\sigma|) - \alpha |\sigma||  \leq  C(\alpha \sqrt{s} + 1) \leq C \alpha,$$
as $\alpha\geq 1$.  
Hence by \eqref{ineq:far-2} and \eqref{ineq:a-3} we obtain
\begin{equation*}\label{ineq:a-3}
\begin{split}
|a(s,\sigma)|
&\leq \frac{C}{(1+\psi(\alpha |\sigma|))^2}+\frac{C\alpha }{1+\psi(\alpha |\sigma|)}
+C{|\re (r(s,\sigma))|}\\
&\leq \frac{C\alpha }{1+\psi(\alpha |\sigma|)}
+C{|\re (r(s,\sigma))|},
\end{split}
\end{equation*}
which is \eqref{ineq:a-2}. 

We use now the pointwise estimates  \eqref{ineq:a-1} and \eqref{ineq:a-2} to get $L^2$ estimates:
\begin{equation*}
\begin{split}
\|a(s)\|_{L^2}&\leq \|a(s)\|_{L^2(I)}+\|a(s)\|_{L^2(\mathbb R\setminus I)}\\
&\leq \|1\|_{L^2(I)}+\|r(s)\|_{L^\infty(I)} \left\| \frac{1}{ (\sqrt{s}+\alpha |\sigma|)^2}\right\|_{L^2(I)}\\
&
+C\left\| \frac{\alpha}{(1+\psi(\alpha |\sigma|))}\right\|_{L^2(\mathbb R\setminus I)}+\|r(s)\|_{L^2(\mathbb R\setminus I)}.\end{split}\end{equation*}
Since
\begin{equation}\label{estint1}
\left\|\frac{1}{(\sqrt{s}+\alpha |\sigma|)^p}\right\|_{L^2}=C\alpha^{-\frac{1}{2}}s^{\frac{1}{4}-\frac{p}2}, \quad p \in \mathbb{N}^*, 
\end{equation}
\begin{equation}\label{schumann}
\left\|\frac 1{(\sqrt{s}+\alpha|\sigma|)^p}\right\|_{L^1} = C \alpha^{-1}s^{-\frac{p}2 +\frac12}, \quad p >1,
\end{equation}
and
\begin{equation}\label{estint2}
\left\| \frac{1}{(1+\psi(\alpha |\sigma|))^p}\right\|_{L^2}=C\alpha^{-\frac{1}{2}},\quad p \in \mathbb{N}^*. 
\end{equation}
we obtain
$$\|a(s)\|_{L^2}\leq C(\alpha^{-\frac 12}+ \alpha^{-\frac 12}s^{-\frac 14}+\alpha^{\frac 12}+s^\frac 34),$$
and \eqref{aL2} follows.

We now estimate the $L^1$ norm near zero.
We have from the pointwise estimate \eqref{ineq:a-1} and the Cauchy-Schwarz inequality
\begin{equation}\label{aL1I}
\|a(s)\|_{L^1(I)}\leq \|1\|_{L^1(I)}+ C\|r(s)\|_{L^2}\left\|  \frac{1}{(\sqrt{s}+\alpha|\sigma|)^2}   \right\|_{L^2(I)}\leq \alpha^{-1}+C\alpha^{-\frac{1}{2}}.
\end{equation}
Next, for $1/2\leq \alpha|\sigma| \leq 1$ we have $\psi(\alpha |\sigma|)=0$, hence \eqref{ineq:a-3} yields
\begin{equation*}
\begin{split}
|a(s,\sigma)|
&
\leq \frac{1}{\re(H)} + \frac{1 + |\re (r)|}{\re \left(r+ H\right)}
+ \frac{| \re (H)|}
{\re \left(r+\ H\right)}
\end{split}
\end{equation*}
therefore using \eqref{ineq:upper-r}, \eqref{ineq:far-1}, \eqref{ineq:far-2} and \eqref{ineq:upper-bound-2} we get
\begin{equation*}
|a(s,\sigma)|\leq C.
\end{equation*}
It follows that 
\begin{equation*}
\|a(s)\|_{L^1(1/2\leq \alpha |\sigma|\leq 1)}\leq C\alpha^{-1},
\end{equation*}
so together with \eqref{aL1I} we obtain \eqref{aL1}.

\medskip

We finally turn to the estimate of the gradient.  We compute
\begin{equation*}\begin{split}
\partial_\sigma a(s,\sigma)&=-\frac{\partial_\sigma \re(r)+\partial_\sigma \left(\frac{\re(H)}{1+\psi(\alpha|\sigma|)}\right)}
{\left( \re(r)+\frac{\re(H)}{1+\psi(\alpha|\sigma|)}\right)^2}+\frac{1}{\re(H)}\frac{\alpha \text{sgn}(\sigma)\psi'(\alpha |\sigma|)}{(1+\psi(\alpha |\sigma|))^2}\\&+\frac{\partial_\sigma \re(H)}{(\re(H))^2(1+\psi(\alpha |\sigma|))}.\end{split}\end{equation*}

For $\sigma\in\mathbb R\setminus I$ we use \eqref{ineq:far-1}, \eqref{ineq:far-2}, and the fact that $|\partial_\sigma H(s,\sigma)|\leq 2\alpha$ to obtain
\begin{equation}\label{gradaLinfty}
|\partial_\sigma a(s,\sigma)|\leq C|\partial_\sigma r|+C
\left|\partial_\sigma \left(\frac{\re(H)}{1+\psi(\alpha|\sigma|)}\right)\right|
+\frac{C\alpha}{1+\psi(\alpha|\sigma|)}.
\end{equation}
Now, we claim that for all $\sigma\in \R^*$, 
\begin{equation}
\label{chili}
\left|\partial_\sigma \left(\frac{H}{1+\psi(\alpha|\sigma|)}\right)\right| \leq  \frac{C\alpha}{1+\psi(\alpha|\sigma|)}. 
\end{equation}
Indeed, since $|\partial_\sigma H(s,\sigma)|\leq 2\alpha$, we have by \eqref{ineq:upper-bound-2}
\begin{equation}\begin{split}
\left|\partial_\sigma \left(\frac{H}{1+\psi(\alpha|\sigma|)}\right)\right|
&\leq  \frac{C\alpha}{1+\psi(\alpha|\sigma|)} + \frac{\alpha |H|}{(1+\psi(\alpha|\sigma|))^2}\\
&\leq \frac{C\alpha}{1+\psi(\alpha|\sigma|)}+\frac{\alpha \sqrt{s} + \alpha^2 |\sigma|}{(1+\psi(\alpha|\sigma|))^2} \\
&\leq \frac{C \alpha}{1+\psi(\alpha|\sigma|)} \left(1+\frac{\sqrt{s} + \alpha |\sigma|}{1+\psi(\alpha|\sigma|)}\right)
\end{split}\end{equation}
and this gives the results for for $s < 1$, as the function $x \mapsto (1 + x)/(1 + \psi(x))$ is bounded on $\R_+$.

Using \eqref{estint2} and the estimate  $\|\partial_\sigma r(t,\sigma)\|_{L^2} \leq t^\gamma$, we obtain \eqref{gradaL2}.

\medskip

For $\sigma\in I$ we have $\psi(\alpha|\sigma|)=0$ so in view of \eqref{ineq:near-1} and \eqref{ineq:near-2} we get
\begin{equation}\label{gradaLinftybis}
|\partial_\sigma a(s,\sigma)|\leq C\frac{|\partial_\sigma r|+\alpha}{(\sqrt{s}+\alpha|\sigma|)^2}\leq  C\left(\frac{|\partial_\sigma r|}{s}+\frac\alpha {(\sqrt{s}+\alpha|\sigma|)^2}\right).
\end{equation}
Therefore, using \eqref{estint1} we obtain \eqref{gradaL2I}. Estimate \eqref{gradaL1} follows from \eqref{estint1} and \eqref{schumann} by using Cauchy-Schwarz inequality:
$$\|\partial_\sigma a(s)\|_{L^1(I)}\leq C\|\partial_\sigma r\|_{L^2}\left\|\frac 1{(\sqrt{s}+\alpha|\sigma|)^2}\right\|_{L^2}+C\left\|\frac \alpha{(\sqrt{s}+\alpha|\sigma|)^2}\right\|_{L^1}.$$

\subsection{Estimates on $b$} We recall the expression of 
$$b(s,\sigma)=2 \ds H(s,\sigma) \partial_\sigma \left(\frac{1}{1+\psi(\alpha |\sigma|)}\right)+H(s,\sigma)\partial_\sigma^2\left(\frac{1}{1+\psi(\alpha |\sigma|)}\right).$$
We notice first that for $|\sigma|\leq 1/\alpha$  we have $\partial_\sigma \psi(\alpha |\sigma|)=0$ and therefore $b(s,\sigma)=0$, and \eqref{bL1}, \eqref{gradbL2I} and \eqref{gradbL1} follow. 

\medskip

When $|\alpha \sigma| > 2$, we have $\psi(\alpha |\sigma|) = \alpha |\sigma|$. 
We thus obtain 
$$
|b(s,\sigma)| \leq  2 | \ds H(s,\sigma)| \frac{\alpha}{(1 + \alpha |\sigma|)^2}
+ 2  |H(s,\sigma)| \frac{\alpha^2}{(1 + \alpha |\sigma|)^3}.
$$
Using \eqref{ineq:upper-bound-2} and the fact that $|\partial_\sigma H(s,\sigma)|\leq C\alpha$, we obtain the bound 
$$
|b(s,\sigma)| \leq C \frac{\alpha^2}{(1+\psi(\alpha |\sigma|))^2}. 
$$

On the other hand, when $|\alpha \sigma| \in [1/2,2]$, we have 
$$
|b(s,\sigma)| \leq  C (\alpha | \ds H(s,\sigma)|
+ \alpha^2  |H(s,\sigma)| ) \leq C \alpha^2 
$$
using again \eqref{ineq:upper-bound-2}. 
By integrating we obtain \eqref{bL2} in view of \eqref{estint2}. 

\medskip

Using the same arguments, we have for $|\alpha \sigma| > 2$ 
\begin{equation*}
\begin{split}
|\partial_\sigma b(s,\sigma)| \leq & C| \ds^2 H(s,\sigma)| \frac{\alpha}{(1 + \alpha |\sigma|)^2} \\&+ C \left( | \ds H(s,\sigma)| \frac{\alpha^2}{(1 + \alpha |\sigma|)^3} + |H(s,\sigma)| \frac{\alpha^3}{(1 + \alpha |\sigma|)^4} \right). 
\end{split}
\end{equation*}
Since $|\partial_\sigma^2 H(s,\sigma)|\leq C \alpha s^{-\frac 12}$, we obtain for $|\alpha \sigma | > 2$, using again the bound \eqref{ineq:upper-bound-2},
$$|\partial_\sigma b(s,\sigma)|\leq C\frac{\alpha^2 s^{-\frac 12}}{(1+\psi(\alpha |\sigma|))^2}+ C\frac{\alpha^3}{(1+\psi(\alpha |\sigma|))^3}.
$$
On the other hand, when $|\alpha \sigma| \in [1/2,2]$ we obtain 
\begin{equation*}
\begin{split}
|\partial_\sigma b(s,\sigma)| &\leq  C\left( \alpha | \ds^2 H(t,\sigma)| + \alpha^2 | \ds H(t,\sigma)| + \alpha^3 |H(t,\sigma)|\right) \\ &\leq C (\alpha^2 s^{-\frac12}+\alpha^3). 
\end{split}
\end{equation*}
We obtain the inequality \eqref{gradbL2} by combining the two latter estimates and using \eqref{estint2}.

\subsection{Estimates on $a(r_1)-a(r_2)$}
Now we turn to the estimate on $c=a(r_1)-a(r_2)$, given by
$$c=\frac{\re(r_2-r_1)}{\left(\re (r_1)+\frac{\re(H)}{1+\psi(\alpha |\sigma|)}\right)\left(\re (r_2)+\frac{\re(H)}{1+\psi(\alpha |\sigma|)}\right)}.$$
In view of \eqref{ineq:near-1}, \eqref{ineq:far-1}, \eqref{ineq:near-2} and \eqref{ineq:far-2} we have for all $ (s,\sigma)\in [0,t_0]\times I$
\begin{equation}\label{cLinftynear}
|c(s,\sigma)|\leq C\frac{|(r_1-r_2)(s,\sigma)|}{(\sqrt{s}+\alpha |\sigma|)^2},
\end{equation}
and for $ (s,\sigma)\in [0,t_0]\times \R\setminus I$
\begin{equation}\label{cLinftyfar}
|c(s,\sigma)|\leq C|(r_1-r_2)(s,\sigma)|.
\end{equation}
Therefore  \eqref{cLinftyfar} and \eqref{schumann} imply
\begin{equation*}
\begin{split}
\|c(s)\|_{L^2}&\leq C\|(r_1-r_2)(s)\|_{L^\infty(I)} \left\|\frac{1}{(\sqrt{s}+\alpha |\sigma|)^2} \right\|_{L^2(I)} +  \|(r_1-r_2)(s)\|_{L^2}\\
&\leq C\|(r_1-r_2)(s)\|_{L^\infty(I)} (\alpha^{-\frac12}s^{-\frac34})  +  \|(r_1-r_2)(s)\|_{L^2}
\end{split}
\end{equation*}
which yields \eqref{cL2}. Also,
\begin{equation*}\begin{split}
\|c(s)\|_{L^1(-\frac 1\alpha,\frac 1\alpha)}&\leq C\|(r_1-r_2)(s)\|_{L^2}\left(\left\|\frac{1}{(\sqrt{s}+\alpha |\sigma|)^2}\right\|_{L^2(I)}+\alpha^{-\frac 12}\right)\\
&\leq C\|(r_1-r_2)(s)\|_{L^2}\left(\alpha^{-\frac 12}s^{-\frac 34}+\alpha^{-\frac 12}\right),\end{split}\end{equation*}
so we have obtained  \eqref{cL1}. 

\medskip
We then express the  derivative of $c$ as
\begin{equation*}
\begin{split}
\partial_\sigma c =&\frac{\partial_\sigma \re(r_2-r_1)}{\left(\re (r_1)+\frac{\re(H)}{1+\psi(\alpha |\sigma|)}\right)\left(\re (r_2)+\frac{\re(H)}{1+\psi(\alpha |\sigma|)}\right)}\\
&-  \frac{\re(r_2-r_1)\left(\re (\partial_\sigma r_1)+\partial_\sigma \left(\frac{\re(H)}{1+\psi(\alpha |\sigma|)}\right)\right)}{\left(\re (r_1)+\frac{\re(H)}{1+\psi(\alpha |\sigma|)}\right)^2\left(\re (r_2)+\frac{\re(H)}{1+\psi(\alpha |\sigma|)}\right)}\\
&-  \frac{\re(r_2-r_1)\left(\re (\partial_\sigma r_2)+\partial_\sigma \left(\frac{\re(H)}{1+\psi(\alpha |\sigma|)}\right)\right)}{\left(\re (r_1)+\frac{\re(H)}{1+\psi(\alpha |\sigma|)}\right)\left(\re (r_2)+\frac{\re(H)}{1+\psi(\alpha |\sigma|)}\right)^2}.
\end{split}
\end{equation*}

For $ (s,\sigma)\in [0,t_0]\times I$ we  have using the bound \eqref{chili} on the derivative of $H$  and  using \eqref{ineq:near-2} 
\begin{equation}\label{gradcLinftynear}\begin{split}
|\partial_\sigma c(s,\sigma)|&\leq C\frac{|\partial_\sigma(r_1-r_2)|}{(\sqrt{s}+\alpha |\sigma|)^2}\\
&+C\frac{(|\partial_\sigma r_1|+|\partial_\sigma r_2|+\alpha)|r_1-r_2|}{(\sqrt{s}+\alpha |\sigma|)^3}.\end{split}\end{equation}
From this inequality, we infer  that
\begin{equation*}
\begin{split}
&\|\partial_\sigma c(s)\|_{L^2( I)}\leq C\frac{\|\partial_\sigma(r_1-r_2)(s)\|_{L^2}}{s}\\
&+C\frac{(\|\partial_\sigma r_1\|_{L^2}+\|\partial_\sigma r_2\|_{L^2})\|(r_1-r_2)(s)\|_{L^\infty(I)}+\alpha\|(r_1-r_2)(s)\|_{L^2}}{s^\frac 32}\\
&\leq  C\|r_1-r_2\|_X (s^{\gamma-1}+\alpha s^{-\frac 34}),\end{split}\end{equation*}
i.e. \eqref{gradcL2I}.

Using also \eqref{estint1} and \eqref{schumann},
\begin{equation*}
\begin{split}
&\|\partial_\sigma c(s)\|_{L^1( I)}\leq C\|\partial_\sigma(r_1-r_2)(s)\|_{L^2} \left\|\frac{1}{(\sqrt{s}+\alpha |\sigma|)^2}\right\|_{L^2(I)}\\
&+C(\|\partial_\sigma r_1\|_{L^2}+\|\partial_\sigma r_2\|_{L^2})\|(r_1-r_2)(s)\|_{L^\infty(I)}\left\|\frac 1{(\sqrt{s}+\alpha |\sigma|)^3}\right\|_{L^2(I)}\\
&+C\|(r_1-r_2)(s)\|_{L^\infty(I)}\left\|\frac {\alpha}{(\sqrt{s}+\alpha |\sigma|)^3}\right\|_{L^1(I)}\\
&\leq  C\|r_1-r_2\|_X (\alpha^{-\frac12}s^{\gamma-\frac 34}+  s^{-\frac 12}),
\end{split}
\end{equation*}
which yields  \eqref{gradcL1}. 

\medskip

When $(s,\sigma) \in [0,t_0] \times \mathbb{R} \setminus I$, we have using \eqref{ineq:far-2}  and the bound \eqref{chili} for the derivative of $H$, 
\begin{equation}\label{gradcLinftyfar}\begin{split}
|\partial_\sigma c(s,\sigma)|&\leq C|\partial_\sigma(r_1-r_2)(s,\sigma)|\\
&+C\left(|\partial_\sigma r_1|+|\partial_\sigma(r_2)|+\frac{\alpha}{1+\psi(\alpha |\sigma|)}\right)|(r_1-r_2)(s,\sigma)|.
\end{split}\end{equation}
Hence we obtain 
\begin{equation*}\begin{split}
&\|\partial_\sigma c(s)\|_{L^2(\mathbb R\setminus I)}\leq C\|\partial_\sigma(r_1-r_2)(s)\|_{L^2}\\
&+C\Big(\|\partial_\sigma r_1\|_{L^2}+\|\partial_\sigma(r_2)\|_{L^2}+\left\|\frac{\alpha}{1+\psi(\alpha |\sigma|)}\right\|_{L^2(\mathbb{R}\setminus I) }\Big)\|(r_1-r_2)(s)\|_{L^\infty}.\end{split}\end{equation*}
We use \eqref{ineq:upper-r}  and \eqref{estint2} together with the Sobolev embedding $H^1(\R)\subset L^\infty(\R)$ to get
$$\|\partial_\sigma c(s)\|_{L^2(\mathbb R\setminus I)}\leq C\|\partial_\sigma(r_1-r_2)(s)\|_{L^2}+C\|(r_1-r_2)(s)\|_{L^2}^\frac 12\|\partial_\sigma(r_1-r_2)(s)\|_{L^2}^\frac 12(s^\gamma+\alpha^\frac 12),$$
and hence \eqref{gradcL2} follows.

\subsection{Proof of Lemma \ref{est-pol}}

\mbox{}
\medskip

 \textbullet\: \textbf{Estimates for $\tilde{a}(r)$.}
We recall that
$$\tilde{a}(r)=- \frac{1}{\overline{r}+\frac{\overline{H}}{1+\psi(\alpha |\sigma|)}}
+{r}+\frac{{H}}{1+\psi(\alpha |\sigma|)}+\frac{1}{(1+\psi(\alpha |\sigma|))\overline{H}}.$$

For $\sigma \in I$ we have $\psi(\alpha |\sigma|)=0$, so
$$\tilde{a}(r)=-\frac{1}{\overline{r}+\overline{H}}+\frac{1}{\overline{H}}+r+H
=\frac{  \overline{r}}{(\overline{r}+\overline{H})\overline{H}}+r+H,$$
hence applying \eqref{ineq:near-1}, \eqref{ineq:near-2} and \eqref{ineq:upper-bound-2} we get \eqref{ineq:a-1-pol}.

\medskip

For $\sigma \in \R\setminus I$ we have
\begin{equation*}
\begin{split}
\tilde{a}(r)&=\frac{1}{(1+\psi(\alpha |\sigma|))\overline{H}}- \frac{1}{\overline{r}+\frac{\overline{H}}{1+\psi(\alpha |\sigma|)}}
+\frac{H}{1+\psi(\alpha |\sigma|)}+r\\
&=\frac{1}{(1+\psi(\alpha |\sigma|))\overline{H}}
+\frac{\overline{r} H}
{(1+\psi(\alpha |\sigma|))(\overline{r}+\frac{\overline{H} }{1+\psi(\alpha |\sigma|))})}\\&+
\frac{1}{\overline{r}+\frac{\overline{H}}{1+\psi(\alpha |\sigma|)}}\left( \frac{|H|^2}{(1+\psi(\alpha |\sigma|))^2}-1 \right)+r,
\end{split}
\end{equation*}
therefore by \eqref{ineq:far-1}, \eqref{ineq:far-2} and \eqref{ineq:upper-bound-2}, 
\begin{equation}\label{ineq:a-3-pol}
\begin{split}
|\tilde{a}(r)(s,\sigma)|
&\leq \frac{C}{(1+\psi(\alpha |\sigma|))^2}+C
\frac{|r(s,\sigma)(\sqrt{s}+\alpha|\sigma|)}{1+\psi(\alpha |\sigma|)}\\
&+C\left| \frac{|H(s,\sigma)|^2}{(1+\psi(\alpha |\sigma|))^2}-1 \right|+|r(s,\sigma)|\\
&\leq \frac{C}{(1+\psi(\alpha |\sigma|))^2}+C|r(s,\sigma)|+C\left| \frac{|H(s,\sigma)|^2}{(1+\psi(\alpha |\sigma|))^2}-1 \right|,
\end{split}
\end{equation}
where we have used the fact that $x\mapsto (1+x)/(1+\psi(x))$ is bounded on $\R_+$.
We now claim that
\begin{equation}
\label{ineq:chili}
\left| \frac{|H(s,\sigma)|^2}{(1+\psi(\alpha |\sigma|))^2}-1 \right|\leq C\left(\frac{\alpha^2}{(1+\psi(\alpha |\sigma|))^2}+\frac{\alpha}{1+\psi(\alpha |\sigma|)}\right),\quad \forall \sigma\in \R\setminus I,
\end{equation}
from which \eqref{ineq:a-2-pol} follows.
In order to prove \eqref{ineq:chili}, we recall that by \eqref{dev:H}, $$H(s,\sigma)=\alpha |\sigma|+R(s,\alpha),\quad  |R(s,\sigma)|\leq \sqrt{s}(1+\alpha)\leq 2\alpha.$$
Therefore
\begin{equation*}
\begin{split}
\left| \frac{|H(s,\sigma)|^2}{(1+\psi(\alpha |\sigma|))^2}-1 \right|&\leq 
\frac{C}{(1+\psi(\alpha |\sigma|))^2}\left(|(\alpha |\sigma|)^2-\psi(\alpha |\sigma|)^2|+\alpha^2+1+
 \psi(\alpha |\sigma|)+\alpha^2|\sigma|\right).
\end{split}
\end{equation*}
Since $ \psi(\alpha |\sigma|)\leq \alpha |\sigma|$ and $(\alpha |\sigma|)^2-\psi(\alpha |\sigma|)^2=\mathds{1}_{\alpha |\sigma|\leq 2}|(\alpha |\sigma|)^2-\psi(\alpha |\sigma|)^2|$ we infer that
\begin{equation*}\begin{split}
\left| \frac{|H(s,\sigma)|^2}{(1+\psi(\alpha |\sigma|))^2}-1 \right|&\leq 
\frac{C}{(1+\psi(\alpha |\sigma|))^2}(\alpha^2+\alpha^2 \sigma)\\
&\leq 
C\left(\frac{\alpha^2}{(1+\psi(\alpha |\sigma|))^2}
+\frac{\alpha}{1+\psi(\alpha |\sigma|)}\frac{\alpha |\sigma|}{1+\psi(\alpha |\sigma|)}\right)\\
& \leq
C\left(\frac{\alpha^2}{(1+\psi(\alpha |\sigma|))^2}+\frac{\alpha}{1+\psi(\alpha |\sigma|)}\right),\quad \forall \sigma\in \R\setminus I,
\end{split}
\end{equation*}
where we have used the fact that $x\mapsto x/(1+\psi(x))$ is bounded on $\R_+$.
\medskip

Using the pointwise estimates \eqref{ineq:a-1-pol} and \eqref{ineq:a-2-pol}, we establish the estimate \eqref{aL2-pol} exactly as \eqref{aL2}. Moreover, we show that \eqref{aL1I} holds true for $\tilde{a}(r)$ since  on the set $I$, $\tilde{a}(r)(s)$ satisfies the same pointwise estimate as $a(r)(s)$, while for $1/2\leq \alpha|\sigma| \leq 1$ we have $\sigma \in \R\setminus I$, hence we obtain by \eqref{ineq:a-3-pol} and \eqref{ineq:upper-r}
\begin{equation*}
\begin{split}
|\tilde{a}(r)(s,\sigma)|
&\leq C+C\left| \frac{|H(s,\sigma)|^2}{(1+\psi(\alpha |\sigma|))^2}-1 \right|\leq C.
\end{split}
\end{equation*}

We next estimate the derivatives. We have
\begin{equation*}\begin{split}
\partial_\sigma \tilde{a}(r)&
= \frac{\partial_\sigma \overline{r}+\partial_\sigma \left(\frac{\overline{H}}{1+\psi(\alpha|\sigma|)}\right)}
{\left( \overline{r}+\frac{\overline{H}}{1+\psi(\alpha|\sigma|)}\right)^2}
-\frac{1}{\overline{H}}\frac{\alpha \text{sgn}({\sigma})\psi'(\alpha |\sigma|)}{(1+\psi(\alpha |\sigma|))^2}- \frac{\partial_\sigma \overline{H}}{\overline{H}^2(1+\psi(\alpha |\sigma|))}\\
&+\partial_\sigma r + \frac{\partial_\sigma H}{1+\psi(\alpha |\sigma|)}- H\frac{\alpha \text{sgn}({\sigma})\psi'(\alpha |\sigma|)}{(1+\psi(\alpha |\sigma|))^2}.
\end{split}
\end{equation*}

Let $\sigma\in\mathbb R\setminus I$. By \eqref{ineq:far-1}, \eqref{ineq:far-2} and the inequality $|\partial_\sigma H(s,\sigma)|\leq 2\alpha$, we get
\begin{equation*}\begin{split}
|\partial_\sigma \tilde{a}(r)(s,\sigma)|&\leq C|\partial_\sigma r|+C
\left|\partial_\sigma \left(\frac{H}{1+\psi(\alpha|\sigma|)}\right)\right|
+\frac{ C\alpha }{1+\psi(\alpha|\sigma|)}+ \frac{\alpha |H|}{(1+\psi(\alpha|\sigma|))^2}\\
&\leq C|\partial_\sigma r|+C
\left|\partial_\sigma \left(\frac{H}{1+\psi(\alpha|\sigma|)}\right)\right|
+\frac{ C\alpha }{1+\psi(\alpha|\sigma|)}\\
&\leq C|\partial_\sigma r|+
+\frac{ C\alpha }{1+\psi(\alpha|\sigma|)},
\end{split}
\end{equation*}where we have used  \eqref{chili} in the last inequality. So we obtain \eqref{gradaL2}.

\medskip

For $\sigma\in I$ we have $\psi(\alpha|\sigma|)=0$. Byy \eqref{ineq:near-1} and \eqref{ineq:near-2}, we infer that
\begin{equation*}
|\partial_\sigma \tilde{a}(r)(s,\sigma)|\leq C\frac{|\partial_\sigma r|+\alpha}{(\sqrt{s}+\alpha|\sigma|)^2}\leq  C\left(\frac{|\partial_\sigma r|}{s}+\frac\alpha {(\sqrt{s}+\alpha|\sigma|)^2}\right),
\end{equation*}which is the pointwise estimate \eqref{gradaLinftybis} for $\partial_\sigma a(r)$.
Hence we obtain  \eqref{gradaL2I} and \eqref{gradaL1}.

\bigskip

\textbullet \: \textbf{Estimates for $\tilde{a}(r_1)-\tilde{a}(r_2)$.} We have
$$\tilde{c}=\tilde{a}(r_1)-\tilde{a}(r_2)=
\frac{\overline{r_1-r_2}}{\left(\overline {r_1}+\frac{\overline{H}}{1+\psi(\alpha |\sigma|)}\right)\left(\overline{r_2}+\frac{\overline{H}}{1+\psi(\alpha |\sigma|)}\right)}+r_1-r_2.$$
In view of \eqref{ineq:near-1}, \eqref{ineq:far-1}, \eqref{ineq:near-2} and \eqref{ineq:far-2} we have for all $ (s,\sigma)\in [0,t_0]\times I$
\begin{equation*}\label{cLinftynear-pol}
|\tilde{c}(s,\sigma)|\leq C\left(\frac{|(r_1-r_2)(s,\sigma)|}{(\sqrt{s}+\alpha |\sigma|)^2}+|(r_1-r_2)(s,\sigma)\right)|\leq C|\frac{|(r_1-r_2)(s,\sigma)|}{(\sqrt{s}+\alpha |\sigma|)^2},
\end{equation*}
which yields the same pointwise estimate as \eqref{cLinftynear} for $c$, and for $ (s,\sigma)\in [0,t_0]\times \R\setminus I$
\begin{equation*}\label{cLinftyfar-pol}
|\tilde{c}(s,\sigma)|\leq C|(r_1-r_2)(s,\sigma)|,
\end{equation*}
which yields the same estimate as \eqref{cLinftyfar}. Hence we immediately obtain \eqref{cL2} and \eqref{cL1}.

\medskip

Next, we compute the derivative of $\tilde{c}$,
\begin{equation*}
\begin{split}
\partial_\sigma \tilde{c} =& \frac{\partial_\sigma (\overline{r_1}-\overline{r_2})}{\left(\overline{r_1}+\frac{\overline{H}}{1+\psi(\alpha |\sigma|)}\right)\left(\overline{r_2}+\frac{\overline{H}}{1+\psi(\alpha |\sigma|)}\right)}-  \frac{(\overline{r_1}-\overline{r_2})\left(\partial_\sigma \overline{r_1}+\partial_\sigma \left(\frac{\overline{H}}{1+\psi(\alpha |\sigma|)}\right)\right)}{\left(\overline{r_1}+\frac{\overline{H}}{1+\psi(\alpha |\sigma|)}\right)^2\left(\overline{r_2}+\frac{\overline{H}}{1+\psi(\alpha |\sigma|)}\right)}\\
&-  \frac{(\overline{r_1}-\overline{r_2})\left(\partial_\sigma \overline{r_2}+\partial_\sigma \left(\frac{\overline{H}}{1+\psi(\alpha |\sigma|)}\right)\right)}{\left(\overline{r_1}+\frac{\overline{H}}{1+\psi(\alpha |\sigma|)}\right)\left(\overline{r_2}+\frac{\overline{H}}{1+\psi(\alpha |\sigma|)}\right)^2}+ \partial_\sigma( r_1- r_2).
\end{split}
\end{equation*}
Therefore we infer that  $\tilde{c}$ satisfies the estimates \eqref{gradcLinftynear} and \eqref{gradcLinftyfar}, which leads to  \eqref{gradcL2}, \eqref{gradcL2I} and \eqref{gradcL1}.

\end{document}